\newtheorem{theorem}{Theorem}[section]
\newtheorem{lemma}[theorem]{Lemma}
\newtheorem{corollary}[theorem]{Corollary}
\def\w{\hat{w}}
\def\C{{\mathcal C}}
\def\u{\hat{u}}
\def\a{\alpha}
\def\dim{{\rm dim \,}}
\def\domain{domain}
\def\range{range}
\def\min{{\rm min}}
\def\length{length}
\def\Z{{\mathbb{Z}}}
\title{The Classification of $\Z_p$-Modules with Partial Decomposition Bases in $L_{\infty \omega}$}
\author{Carol Jacoby}
\address{Jacoby Consulting, Long Beach, California}
\email{cjacoby@jacobyconsulting.com}
\author{Peter Loth}
\address{Department of Mathematics, Sacred Heart University, Fairfield, CT 06825}
\email{lothp@sacredheart.edu}
\keywords{partial decomposition basis, partial isomorphism, infinitary equivalence, Ulm-Kaplansky invariants, Warfield invariants}
\subjclass[2010]{Primary 03C52, 13C05, 20K21; Secondary  03E10, 20K25, 20K35}
\begin{document}
\maketitle

\begin{abstract}
Ulm's Theorem presents invariants that classify countable abelian torsion groups up to isomorphism. Barwise and Eklof extended this result to the classification of arbitrary abelian torsion groups up to $L_{\infty \omega}$-equivalence. 
In this paper, we extend this classification to a class of mixed $\Z_p$-modules which includes all Warfield modules and is closed under $L_{\infty\omega}$-equivalence.
The defining property of these modules is the existence of what we call a partial decomposition basis, a generalization of the concept of decomposition basis. 
We prove a complete classification theorem in $L_{\infty\omega}$ using invariants deduced from the classical Ulm and Warfield invariants.
\end{abstract}

\section{Introduction}
This is the first of two papers based on a 1980 doctoral dissertation \cite{J1} that has not been previously published in a readily available form but has been the starting point for recent work, specifically \cite{JLLS}, \cite{JL1}, \cite{J2}, \cite{JL2}, \cite{JL3}, and \cite{JL4}. Independently, G\"obel, Leistner, Loth and Str\"ungmann \cite{GLLS} recently explored the same topic and proved similar results.
Here we focus on the local case. A forthcoming paper will address the global case \cite{JL5}. This paper corrects and clarifies the original and streamlines some of the proofs.

Ulm's Theorem \cite{U} defines invariants that classify countable torsion abelian groups up to isomorphism. Generalizations of this theorem have taken two directions. The first extends the class of groups that may be classified up to isomorphism. Warfield \cite{W2} developed new invariants that, along with the Ulm invariants, serve to classify a class of local groups including all countable torsion and all completely decomposable torsion-free local abelian groups. The other direction was taken by Barwise and Eklof \cite{BE}. They looked at the classification problem in the language $L_{\infty \omega}$ and classified all torsion abelian groups up to $L_{\infty \omega}$-equivalence using modified Ulm invariants. Since countable groups that are $L_{\infty \omega}$-equivalent are also isomorphic, the result is indeed a generalization of Ulm's Theorem. 

This paper seeks to unify these two generalizations of Ulm's Theorem by defining a class of local groups that includes those studied by Warfield and that may be classified in $L_{\infty \omega}$. The defining property of these groups is the existence of what we call a partial decomposition basis, a generalization of the concept of decomposition basis that is preserved under $L_{\infty \omega}$-equivalence. Invariants are defined for this class and a classification theorem is proved. As in most problems of this type, we first look at the local case. 
The global case will be addressed in a future paper \cite{JL5}.

Section 2 presents the background material, including the definition and properties of $L_{\infty \omega}$, the Ulm invariants and Ulm's Theorem, the result of Barwise and Eklof, the work of Warfield and others, and the definition and properties of decomposition bases. It is assumed that the reader is familiar with first order logic and the fundamentals of the theories of abelian groups and modules over a principal ideal domain. The word ``group" used in this paper will mean abelian group and ``rank" will mean torsion-free rank, i.e., the dimension of $\mathbb Q \otimes  G$ as a vector space over $\mathbb Q$.

The third section introduces the concept of partial decomposition basis and defines the restricted Warfield invariant. It is proved that this invariant is independent of the choice of partial decomposition basis and is indeed invariant under $L_{\infty \omega}$-equivalence.

Section 4 proves the classification theorem for local groups with partial decomposition bases in $L_{\infty\omega}$.

\section{Background}

\subsection{The Language $L_{\infty \omega}$ and Partial Isomorphisms}

The results of this paper will be considered in light of the language of infinitary logic known as $L_{\infty \omega}$. This is an extension of the familiar language of first order logic to allow infinite conjunctions and disjunctions.

The symbols of the language include those of first order logic with identity and a variable $v_\alpha$ for each ordinal $\alpha$, and since we will be using this language to discuss abelian groups, we include the binary function symbol +, the unary function symbol - and the constant 0. We use these symbols to form atomic formulas in the usual way.

Now for each ordinal $\alpha$ we define, by induction on $\alpha$, $L_{\infty \omega}^\alpha$ to be the smallest class $Y$ containing the atomic formulas and such that

\begin{itemize}
    \item [\rm{(i)}] if $\varphi \in Y$ then $\neg \varphi \in Y$
    \item [\rm{(ii)}] if $\Phi$ is a set, $\Phi \subseteq Y$, then $\wedge \Phi$ and $\vee \Phi$ $\in Y$
    \item [\rm{(iii)}] if $\varphi \in L_{\infty \omega}^\beta$ for some $\beta < \alpha$ and $v$ is a variable, then $\forall v \varphi \in Y$ and $\exists v \varphi \in Y$
    \end{itemize}
Then we let $L_{\infty \omega} = \bigcup_{\alpha \in Ord} {L_{\infty \omega}^\alpha}$.

An example of a sentence of $L_{\infty \omega}$ is $$\psi = \forall x \vee \{nx = 0: n \in \omega , n \neq 0 \}$$ where $nx$ is an abbreviation of the expression representing $x + \cdots +x$ with $n$ terms. It is clear that if $G$ is a group, then $G$ is torsion if and only if $G \models \psi$. There is no sentence of first order logic with this property, as can be seen by a simple compactness argument. 

If $G$ and $H$ are groups, we say $G \equiv_\infty  H$, $G$ and $H$ are {\em $L_{\infty \omega}$-equivalent}, if and only if $G$ and $H$ satisfy the same sentences of  $L_{\infty \omega}$. This relationship is clearly stronger than elementary equivalence, but weaker than isomorphism. $G$ and $H$ do, however, share isomorphisms on subgroups in the following sense.

We say that $G \cong_p H$, $G$ is {\em partially isomorphic} to $H$, if there is a nonempty set $I$ of isomorphisms between subgroups of $G$ and subgroups of $H$ with the {\em back-and-forth property}: For any $f \in I$ and $a \in G$ (respectively $b \in H$), there is a $g \in I$ such that $f \subseteq g$ and $a \in \domain(g)$ (respectively $b \in \range(g)$). We will sometimes write $I: G \cong_p H$.

\begin{theorem} \label{thm1}
If $G$ and $H$ are countable and $G \cong_p H$ then $G \cong H$.
\end{theorem}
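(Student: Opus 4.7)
The plan is to use a standard back-and-forth construction to build an isomorphism $G \to H$ as the union of an ascending chain of partial isomorphisms drawn from $I$. This is the expected application of countability: in general a partial isomorphism can fail to extend to a total one, but countability lets us dovetail the enumerations of $G$ and $H$ into a countable sequence of extension steps.

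Concretely, I would enumerate $G = \{a_0, a_1, a_2, \ldots\}$ and $H = \{b_0, b_1, b_2, \ldots\}$ and pick any $f_0 \in I$. Then I would construct a chain $f_0 \subseteq f_1 \subseteq f_2 \subseteq \cdots$ in $I$ by alternating: at an odd stage $2n+1$ I apply the forth property of $I$ to $f_{2n}$ and $a_n$ to get $f_{2n+1} \in I$ with $f_{2n+1} \supseteq f_{2n}$ and $a_n \in \domain(f_{2n+1})$; at an even stage $2n+2$ I apply the back property to $f_{2n+1}$ and $b_n$ to obtain $f_{2n+2} \in I$ with $f_{2n+2} \supseteq f_{2n+1}$ and $b_n \in \range(f_{2n+2})$. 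Let $f = \bigcup_n f_n$.

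By construction $\domain(f) \supseteq \{a_n : n \in \omega\} = G$ and $\range(f) \supseteq \{b_n : n \in \omega\} = H$, so $f$ is defined on all of $G$ and surjects onto $H$. It remains to check that $f$ is a well-defined group homomorphism and injective. Since each $f_n$ is by assumption an isomorphism between subgroups of $G$ and $H$, and the $f_n$ form a chain, the union $f$ is a function; moreover for any finitely many elements $x_1, \ldots, x_k \in G$ there is a single $f_N$ containing all of them in its domain, so any equation such as $f(x_1 + x_2) = f(x_1) + f(x_2)$ or the preservation of $-$ and $0$ reduces to the corresponding property of $f_N$. Injectivity follows from the fact that each $f_N$ is injective on its domain.

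The only subtle point is ensuring that the back-and-forth property as stated truly guarantees an extension $g \in I$ at each stage (not merely some isomorphism extending $f$), so that the chain stays inside $I$ and the hypothesis can be reapplied; this is immediate from the definition, which places $g$ in $I$. Thus there is no real obstacle: the argument is the classical Cantor-style back-and-forth, and countability of $G$ and $H$ is used exactly to guarantee that every element is eventually captured in finitely many steps.
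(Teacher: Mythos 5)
Your proof is correct and is exactly the classical Cantor back-and-forth argument that the paper cites from Barwise: start with any $f \in I$ and alternately extend within $I$ to capture each element of $G$ and $H$ in turn, taking the union at the end. This matches the paper's (referenced) proof in approach and detail.
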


The proof is in Barwise \cite[Theorem 2]{Bar} and simply involves starting with any $f \in I: G \cong_p H$ and extending it alternately to elements of $H$ and $G$.

Thus far we have defined two concepts, the logical  concept of equivalence in the language $L_{\infty \omega}$ and the algebraic concept of partial isomorphism. Karp's Theorem allows us to use them interchangeably.

\begin{theorem} \label{thm2} \cite{Kar}
$G \equiv_\infty H$ if and only if $G \cong_p H$.
\end{theorem}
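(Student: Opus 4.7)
The plan is to prove the two directions of Karp's Theorem by the standard back-and-forth arguments. For ($\Leftarrow$), fix a system $I: G \cong_p H$ and prove by induction on the complexity of $L_{\infty\omega}$-formulas $\varphi(x_1,\ldots,x_n)$ that for every $f \in I$ and every tuple $\bar a$ from $\domain(f)$,
$$G \models \varphi(\bar a) \quad \text{iff} \quad H \models \varphi(f(\bar a)).$$
The atomic case uses that $f$ is a subgroup isomorphism; Boolean connectives and set-sized conjunctions/disjunctions reduce immediately to the induction hypothesis. For $\varphi = \exists v\,\psi$: if $G \models \psi(\bar a, c)$, the back-and-forth property yields $g \supseteq f$ in $I$ with $c \in \domain(g)$, and induction gives $H \models \psi(g(\bar a), g(c))$. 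The universal case and the converse implication are symmetric. Taking $\bar a$ empty and any $f \in I$ yields $G \equiv_\infty H$.

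For ($\Rightarrow$), assume $G \equiv_\infty H$ and let $I$ consist of all finite partial maps $f:(a_1,\ldots,a_n)\mapsto(b_1,\ldots,b_n)$, with $\bar a$ from $G$ and $\bar b$ from $H$, such that $\bar a$ and $\bar b$ satisfy exactly the same $L_{\infty\omega}$-formulas. Because atomic formulas are preserved, each such $f$ extends uniquely to an isomorphism between the subgroups $\langle \bar a \rangle$ and $\langle \bar b \rangle$, and I will identify $f$ with this isomorphism. The empty map lies in $I$ by hypothesis, so $I$ is nonempty.

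The heart of the matter is verifying the back-and-forth property. Given $f \in I$ carrying $\bar a$ to $\bar b$ and a new element $c \in G$, suppose for contradiction that for every $d \in H$ the extension $\bar a c \mapsto \bar b d$ fails to lie in $I$. For each such $d$ choose an $L_{\infty\omega}$-formula $\varphi_d(\bar x, y)$ with $G \models \varphi_d(\bar a, c)$ and $H \not\models \varphi_d(\bar b, d)$. Since $H$ is a set, the formula
$$\chi(\bar x) \;=\; \exists y \bigwedge_{d \in H} \varphi_d(\bar x, y)$$
is a legitimate $L_{\infty\omega}$-formula by clause (ii) of the definition. Then $G \models \chi(\bar a)$, witnessed by $c$, while any candidate witness $d_0 \in H$ falsifies its own conjunct $\varphi_{d_0}$, so $H \not\models \chi(\bar b)$. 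This contradicts $f \in I$. The symmetric argument handles extension by an element of $H$.

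The main obstacle lies in the $(\Rightarrow)$ direction: choosing the right notion of ``good'' partial map so that (a) it already encodes enough information to be an isomorphism of subgroups, and (b) the potentially unbounded family of obstructions $\{\varphi_d : d \in H\}$ can be packaged into a single formula. This is precisely where the set-sized conjunctions of $L_{\infty\omega}$ are indispensable; the argument would fail outright in ordinary first-order logic, where no such $\chi$ is available.
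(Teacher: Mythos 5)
Your proof is correct and follows the standard Karp/Barwise back-and-forth argument to which the paper defers (it cites Barwise, Theorem 3, rather than giving a proof). Your construction of $I$ from finite tuples in the ($\Rightarrow$) direction also establishes the paper's accompanying remark that one may choose $I: G \cong_p H$ so that every $f \in I$ has finitely generated domain and range.
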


The proof is in Barwise \cite[Theorem 3]{Bar}, and also shows that if $G \cong_p H$, we may choose $I: G \cong_p H$ such that every $f \in I$ has finitely generated domain and range.

\subsection{Height}

In this paper we consider modules over a principal ideal domain $R$. This allows us to consider groups as modules over $\Z$ and local groups as modules over $\Z_p$, where $\Z_p$ is the ring of integers localized at the prime $p$.
In the following definitions, we fix a module $G$ and a prime $p$ in $R$. Let $pG = \{px: x \in G\}$.
If $\alpha$ is an ordinal, we define $p^\alpha G$ by induction on $\alpha$ as follows: 
\[p^\alpha G = 
\begin{cases}
p(p^\beta G) &\text{ if } \alpha = \beta + 1
\\
 \bigcap_{\beta < \alpha} p^\beta G &\text{ if } \alpha \text{ is a limit ordinal.}
\end{cases}
\]
We define the {\em p-height} of $x$, $|x|_p$, for $x \in G$, to be the unique ordinal $\alpha$ such that $x \in p^\alpha G$ and $x \notin p^{\alpha + 1} G$ if it exists, and the symbol $\infty$ otherwise. Note that the value of $|x|_p$ is dependent on the module $G$, but the module under consideration will be clear from the context. We may talk about the height of $x$ and write $|x|$ if $p$ is understood.
Then, if $S$ is a submodule of $G$ we say $x\in G$ is {\em proper} with respect to $S$ is $x$ has maximal height in the coset $x+S$.  We define the {\em ($p$-)length} of $G$, written {\em length$(G)$}, to be the least ordinal $\alpha$ such that $p^\alpha G=p^{\alpha +1} G$. 
Such an $\alpha$ exists by cardinality considerations. We say that a function $f:G\to H$ {\em preserves ($p$-)heights} if for every $x \in \domain(f)$, $|x|=|f(x)|$, and it {\em preserves heights up to $\alpha$} if for every $x \in \domain(f)$, if $|x| < \alpha$ then $|f (x)| = |x|$ and if $|x| \ge \alpha$ then $|f (x)| \ge \alpha$.
If $R=\Z_p$, we call $p^\infty G = \cap p^\a G$ {\em the divisible part} of $G$ and say $G$ is {\em reduced} if $p^\infty G =0$. 

We say a sequence $(\alpha_i)$, $ i \in \omega$, is an {\em Ulm sequence} if each $\alpha_i$ is either an ordinal or the symbol $\infty$ and for all $i$, if $\alpha_i = \infty$, then $\alpha_{i+1} = \infty$, and if $\alpha_i \ne \infty$, then $\alpha_{i+1} > \alpha_i$.  If $x$ is an element of the module $G$, $U(x)$, the {\em Ulm sequence of $x$}, is the sequence $(|p^ix|)_{i \in \omega}$.  We call the Ulm sequences $(\alpha_i)$ and $(\beta_i)$ {\em equivalent}, written $(\alpha_i) \sim (\beta_i)$ if there are positive integers $m$ and $n$ such that $\alpha_{i+n} = \beta_{i+m}$ for all $i \ge 0$.  Note that if $x$ and $y$ have a nonzero common multiple, then $U(x)$ and $U(y)$ are equivalent. In particular, if $G$ is of rank one, the equivalence class of $U(x)$ is independent of the choice of $x$ of infinite order.  Kaplansky \cite{Kap1} proved that this invariant classifies countably generated modules of rank one over a complete discrete valuation ring.  

Let $H$ be a submodule of $G$.  We let $H^0$ denote $\{x \in G: ax \in H$ for some $a \in R\setminus \{0\}$\}.  Note that $H^0$ depends on $G$, but the choice of this module will always be clear from the context.  The importance of $H^0$ derives from the easily verified fact that the height of an element of $H$ is the same in $H^0$ as it is in $G$.  Note also that if $x_1, ..., x_n \in G$ and $a_1, ..., a_n, b_1, ... , b_n \in R\setminus \{0\}$ then $$\langle a_1x_1, ..., a_n x_n\rangle ^0 = \langle b_1x_1, ..., b_nx_n\rangle ^0,$$ a fact we will use repeatedly.  
	  
Suppose $G$ is a module over $R=\Z_p$ and $H$ is a submodule of $G$.  We say $H$ is a {\em nice} submodule of $G$ if for any ordinal $\alpha$, $$p^\alpha (G/H)= (p^\alpha G+H)/H,$$ or equivalently, every nonzero coset of $H$ has an element of maximal $p$-height \cite[p. 74]{R}. Note that $H$ is nice if and only if every nonzero coset of $H$ contains an element proper with respect to $H$.  

\subsection{Ulm's Theorem and Its Generalization in $L_{\infty\omega}$}

Let $G$ be any group and $p$ a prime. Then we let $G[p]$ denote $\{ x \in G: px = 0\}$, and write $p^\alpha G[p]$ for $(p^\alpha G)[p]$.
For each ordinal $\alpha$, we define the {\em Ulm invariant}, $$u_p (\alpha, G) = \dim  p^\alpha G [p] / p^{\alpha +1} G [p]$$ as a $\mathbb{Z}/(p)$-vector space. When $p$ is understood, we will write simply $u(\alpha, G)$.

\begin{theorem} \label{thm4} (Ulm) \cite{U}
If $G$ and $H$ are  countable reduced p-groups, then $G \cong H$ if and only if $u(\alpha , G) = u(\alpha , H)$ for all ordinals $\alpha$.
\end{theorem}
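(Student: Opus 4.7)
The plan is to leverage the framework already in the excerpt: since $G$ and $H$ are countable, Theorem~\ref{thm1} reduces $G \cong H$ to establishing the partial isomorphism $G \cong_p H$, and the Ulm invariants are designed precisely to supply the finite height-preserving maps that fill out such a back-and-forth system. The forward direction is immediate: any isomorphism $\phi: G \to H$ preserves $p$-height, so it restricts to an isomorphism $p^\alpha G[p] \to p^\alpha H[p]$ carrying $p^{\alpha+1}G[p]$ onto $p^{\alpha+1}H[p]$, inducing an isomorphism of the quotient $\Z/(p)$-vector spaces that witnesses $u(\alpha, G) = u(\alpha, H)$ for every ordinal $\alpha$.

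For the reverse direction I would let $I$ be the set of all height-preserving isomorphisms $f: A \to B$ where $A$ is a finite subgroup of $G$ and $B$ is a finite subgroup of $H$. The empty map lies in $I$, so $I$ is nonempty, and once the back-and-forth property is verified, Theorem~\ref{thm1} delivers $G \cong H$. By symmetry it suffices to show: given $f \in I$ and $x \in G$, there is some $g \in I$ extending $f$ with $x \in \domain(g)$. An easy induction on the order of $x$ modulo $A$ reduces to the case $px \in A$; and by replacing $x$ by a translate $x - a$ with $a \in A$, I may further assume $x$ is proper with respect to $A$. Writing $\alpha = |x|$ and $y = f(px)$, the extension step amounts to choosing $z \in H$ with $pz = y$, $|z| = \alpha$, and $z$ proper with respect to $B$; then $x \mapsto z$ gives the required extension.

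The main obstacle is producing this $z$, and this is where the equality of Ulm invariants enters decisively. Starting from any $z_1 \in H$ with $pz_1 = y$, the candidates are $z = z_1 + h$ with $h \in H[p]$; the task is to choose $h$ so that $z_1 + h$ has height exactly $\alpha$ and remains proper with respect to $B$. The analysis splits on whether $|y| = \alpha + 1$ or $|y| > \alpha + 1$, and the hardest subcase demands an element of $p^\alpha H[p]$ whose class modulo $p^{\alpha+1}H[p]$ lies outside the finite-dimensional subspace determined by $B$ and matches, via $f$, a specific class in $p^\alpha G[p]/p^{\alpha+1}G[p]$ coming from $x$. The hypothesis $u(\alpha, G) = u(\alpha, H)$ furnishes a linear bijection between these quotients, and since $B$ is finite the forbidden subspace has finite dimension, so a suitable $h$ can always be selected. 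With the extension lemma established, $I$ witnesses $G \cong_p H$, and Theorem~\ref{thm1} concludes the proof.
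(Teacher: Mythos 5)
The paper itself does not prove Ulm's Theorem; it cites Kaplansky's Theorem 14, and the back-and-forth core of that argument reappears here as Lemma~\ref{lemma2} and Theorem~\ref{thm10}. Your overall plan is the right one: build a nonempty set $I$ of height-preserving isomorphisms between finite subgroups (finite because finitely generated subgroups of a $p$-group are finite), verify back-and-forth, and invoke Theorem~\ref{thm1} via countability. The forward direction, the reduction to $px \in A$, and the passage to a proper representative of $x + A$ are all correct, and the ``avoid a finite-dimensional forbidden subspace'' idea is the right intuition for the hard subcase.

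There is, however, a genuine gap in the extension step. It is not enough to replace $x$ by a proper element of $x + A$; Kaplansky's argument requires the further normalization that among proper representatives you choose one with $|px|$ maximal --- in particular, if some proper $x' \in x+A$ has $|px'| > |x'| + 1$, you must use that one. (The paper's Theorem~\ref{thm10} opens with exactly this: ``we may assume $x$ has been chosen such that $|px|_p > \sigma+1$ if this is possible for any proper element of the coset.'') Without it, your Case~1 ($|y| = \alpha + 1$) does not close: you cannot guarantee that some $z_1 + h$ with $h \in H[p]$ is simultaneously of height $\alpha$, a $p$-th root of $y$, and proper with respect to $B$. The classical resolution is that any failure in Case~1 (e.g.\ the chosen $z_1$ lying in $B$, or not being proper) pulls back under $f^{-1}$ to a proper representative of $x + A$ with larger $|px|$, contradicting the optimal choice --- a contradiction your setup cannot reach. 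A secondary issue: in Case~2 the hypothesis $u(\alpha,G) = u(\alpha,H)$ is not used to produce a ``linear bijection matching via $f$'' (none exists canonically); what is actually used is Lemma~\ref{lemma2}: because $x$ furnishes a proper element of height $\alpha$ in $p^\alpha G[p]$, the map $A_\alpha^*/A_{\alpha+1} \to p^\alpha G[p]/p^{\alpha+1}G[p]$ is not onto, so $\dim A_\alpha^*/A_{\alpha+1} < u(\alpha,G) = u(\alpha,H)$; and since $f$ carries $A_\alpha^*/A_{\alpha+1}$ isomorphically onto $B_\alpha^*/B_{\alpha+1}$, the $H$-side map is also not onto, producing the needed proper $h$. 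Your phrase ``since $B$ is finite the forbidden subspace has finite dimension'' is not sufficient on its own, because $p^\alpha H[p]/p^{\alpha+1}H[p]$ could itself have exactly that dimension; the dimension \emph{strict} inequality, transported by $f$, is what rules this out.
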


The proof is in Kaplansky \cite[Theorem 14]{Kap1} or Fuchs \cite[Theorem 77.3]{F}  and is based on a back-and-forth argument made possible by the countability hypothesis. This indicates that the Ulm invariants, or some modification of them, may be used to classify $p$-groups, not necessarily countable, up to partial isomorphism. This is in fact the case, as was proved by Barwise and Eklof. 
Define $$\hat{u}(\alpha, G)
= \min \{u(\alpha, G), \omega\}$$ 
for $G$ a group, $\alpha$ an ordinal.

\begin{theorem} \label{thm5} (Barwise and Eklof) \cite[Theorem 10]{BE}
For any two reduced p-groups $G$ and $H$, $ G \cong_p H$ if and only if $\hat{u}(\alpha, G)$ = $\hat{u}(\alpha, H)$ for every ordinal $\alpha$.
\end{theorem}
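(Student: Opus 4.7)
The plan is to prove both directions by back-and-forth arguments. For the forward direction, assume $I \colon G \cong_p H$. The first step is to show that every $f \in I$ preserves $p$-heights. I would argue this by transfinite induction on $\alpha$, proving that each $f \in I$ preserves heights up to $\alpha$. The limit case is immediate from the induction hypothesis. For the successor step, if $x \in \domain(f)$ satisfies $|x| \ge \alpha + 1$, write $x = py$ with $|y| \ge \alpha$ and invoke back-and-forth to extend $f$ to some $g \in I$ with $y \in \domain(g)$; by induction $|g(y)| \ge \alpha$, hence $|f(x)| = |pg(y)| \ge \alpha + 1$. Applying the same argument to $f^{-1}$ via the $H$-side back-and-forth yields the reverse inequality. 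Once height preservation is established, suppose $\hat u(\alpha, G) \ne \hat u(\alpha, H)$ for some $\alpha$; without loss of generality $\hat u(\alpha, G) = n < \hat u(\alpha, H)$. Pick $y_1, \dots, y_{n+1} \in p^\alpha H[p]$ that are linearly independent modulo $p^{\alpha+1} H[p]$, and use back-and-forth to find $g \in I$ with all $y_i$ in its range. Then the $g^{-1}(y_i)$ lie in $p^\alpha G[p]$ by height preservation, and applying height preservation to every nonzero $\mathbb Z/(p)$-linear combination shows they remain linearly independent modulo $p^{\alpha+1} G[p]$, contradicting $\hat u(\alpha, G) = n$.

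For the backward direction, assume $\hat u(\alpha, G) = \hat u(\alpha, H)$ for every $\alpha$. The plan is to take $I$ to consist of all height-preserving isomorphisms $f \colon A \to B$ where $A \le G$ and $B \le H$ are finite, subject to a compatibility condition on the ``residual'' Ulm invariants: for every $\alpha$, the dimension (capped at $\omega$) of $p^\alpha G[p]$ modulo $p^{\alpha+1} G[p] + (A \cap p^\alpha G[p])$ equals the corresponding quantity on the $H$-side with $B$ in place of $A$. The zero map lies in $I$ by hypothesis, so $I$ is nonempty, and the task is to verify the back-and-forth property.

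The back-and-forth step is where the real work lies, and I expect this to be the main obstacle. Given $f \colon A \to B$ in $I$ and $a \in G$, I would induct on the order of $a + A$ in $G/A$ to reduce to the case $pa \in A$. Replacing $a$ by an element of maximal height in its (finite) coset $a + A$, set $\alpha = |a|$; the goal is to find $b \in H$ with $pb = f(pa)$, $|b| = \alpha$, and $b$ proper with respect to $B$, such that the extended isomorphism still lies in $I$. When $pa = 0$, this amounts to choosing an element of $p^\alpha H[p]$ of the correct height that is independent modulo $B + p^{\alpha+1} H[p]$, and the matching of residual invariants at $\alpha$ supplies such an element. When $pa \ne 0$, one first picks any preimage of $f(pa)$ under multiplication by $p$ in $H$ of height $\ge \alpha$, then adjusts it by a suitable element of $p^\alpha H[p]$ to enforce properness and height equality; once again, the residual invariant hypothesis produces the needed adjustment. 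Verifying that the extended map still satisfies the residual matching condition, so that $I$ remains usable for subsequent extensions, is a direct but careful bookkeeping argument, and the symmetric extension from $H$ to $G$ works the same way.
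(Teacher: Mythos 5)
Your forward direction is correct and is a cleaner route than the one the paper gestures at. Instead of exhibiting $L_{\infty\omega}$ sentences expressing $\hat u(\alpha,G)=n$, you work directly with the partial isomorphism: you re-derive the height-preservation lemma (this is the paper's Lemma \ref{lemma1}, proved the same way), then push a maximal independent family in $p^\alpha H[p]/p^{\alpha+1}H[p]$ back through some $g\in I$ and observe that height-preservation forces its image to remain independent modulo $p^{\alpha+1}G[p]$. By Karp's Theorem this is equivalent to the sentence-building approach, but it avoids the formula bookkeeping entirely. That part is fine.

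The backward direction has the right skeleton (take $I$ to be all height-preserving isomorphisms between finite subgroups and prove back-and-forth), but the technical core of the extension step is not correctly identified, and this is where the theorem actually lives. Two specific problems. First, your ``residual'' invariant $\dim p^\alpha G[p]/\bigl(p^{\alpha+1}G[p] + (A\cap p^\alpha G[p])\bigr)$ is not the object that governs whether you can find a proper element. The correct quotient, from Kaplansky's Lemma 13 (the paper's Lemma \ref{lemma2}), is $p^\alpha H[p]/p^{\alpha+1}H[p]$ modulo the \emph{range of the map} $V\colon B_\alpha^*/B_{\alpha+1}\to p^\alpha H[p]/p^{\alpha+1}H[p]$, where $B_\alpha^* = (B\cap p^\alpha H)\cap\{x : px\in p^{\alpha+2}H\}$ and $V$ sends $x$ to $x-y$ with $y\in p^{\alpha+1}H$, $py=px$. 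Your quotient only accounts for $B\cap p^\alpha H[p]$, which is in general a proper subset of the image of $V$; an element $b$ independent of $B+p^{\alpha+1}H[p]$ can still fail to be proper with respect to $B$ (take $t\in B$ of height $\alpha$ with $pt\neq 0$, $|pt|\ge\alpha+2$, and $b+t$ of height $>\alpha$). As an aside, your residual condition is in fact \emph{automatic} for any height-preserving isomorphism between finite subgroups once $\hat u(\alpha,G)=\hat u(\alpha,H)$, so it adds nothing to $I$ — which is another sign it is not the constraint doing the work. Second, your case split ($pa=0$ versus $pa\neq 0$) and the description ``adjust by a suitable element of $p^\alpha H[p]$'' miss the case $|pa|=|a|+1$. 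In that case no adjustment by an element of $p^\alpha H[p]$ is available or needed; the chosen preimage is \emph{automatically} proper, but only because $a$ is chosen not merely proper in $a+A$ but also to maximize $|pa|$ among proper elements of the coset — a secondary optimization your sketch omits. Both points are handled in the paper's Theorem \ref{thm10}, whose Case 1/Case 2 split on $|pa|$, careful initial choice of $a$, and invocation of Lemma \ref{lemma2} are exactly the ingredients your outline needs but does not supply.
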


The proof of the ``if" direction is the same back-and-forth argument used in the proof of Ulm's Theorem. The ``only if" part assumes $G \equiv_\infty H$ and exhibits sentences of $L_{\infty \omega}$ that say ``$\hat{u}(\alpha, G) = n$" and ``$\hat{u}(\alpha, G) = \omega$".

If we add the invariant $$\hat{u}(\infty, G)
 = \min\{\dim p^\infty G [p], \omega\},$$ 
 this theorem classifies all torsion groups in $L_{\infty \omega}$. 

\subsection{Decomposition Bases and Warfield's Theorem}

In this paper we focus on the local case; the global case will be considered in \cite{JL5}. Every $p$-local group  may be considered as a module over  $\mathbb{Z}_p$.  More generally we may consider modules over a principal ideal domain.
	
If $G$ is a module over a principal ideal domain $R$ we say $X \subseteq G$ is a {\em decomposition set} if $X$ is an independent set of elements of infinite order and for all $x_1, \dots, x_n \in X$, $a_1, \dots ,a_n \in R$, and $p$ a prime in $R$, $$|a_1x_1+\cdots +a_nx_n|_p = \min_{1\le i \le n} \{|a_ix_i|_p\}.$$ We let $\langle X\rangle $ denote the submodule generated by $X$.  If $X$ is a decomposition set and $G/ \langle X\rangle $ is torsion, we say $X$ is a {\em decomposition basis} for $G$.

Here are some examples:
\begin{itemize}
\item[\rm{(i)}]The empty set forms a decomposition basis for any torsion module. 

\item[\rm{(ii)}]If $G$ is completely decomposable, say $G$ = $\bigoplus_{i \in I} G_i$, where each $G_i$ is of rank one, and if $ x_i \in G_i$ has infinite order, then $\{x_i\}_{i \in I}$ is a decomposition basis for $G$. 
\end{itemize}

Warfield \cite{W2} defined a class of $\mathbb{Z}_p$-modules consisting of all summands of simply presented modules.  (A $\Z_p$-module is called {\em simply presented} if it can be generated by a set of elements subject only to defining relations of the form $p^nx=0$ or $p^mx=y$ for generators $x$, $y$, $x\ne y$.)  These modules have come to be called {\em Warfield modules}. Warfield modules have decomposition bases \cite{HRW}.  Warfield classified these modules, extending previous work of Nunke \cite{N}, Hill \cite{H} and Baer \cite{Baer}.  Stanton \cite{S} extended these results to the global case with the use of additional invariants. 	

For $G$ a $\Z_p$-module with a decomposition basis $X$ and $e$ an equivalence class of Ulm sequences, we define  the Warfield invariant, $$w(e,G) =\text{ the cardinality of } \{x \in X: U(x) \in e\}.$$ Warfield proved that this is independent of the choice of $X$ and that these invariants, along with the Ulm invariants, serve to classify Warfield modules up to isomorphism.

\section{The Class and the Invariant}

\subsection{The Partial Decomposition Basis}
	
	We will define our class for modules over a principal ideal domain.  In order for our class to be closed under $L_{\infty \omega}$-equivalence, it must include more than just the modules with decomposition bases.  For example, $\prod_ \omega \mathbb{Z}  \cong _ p \bigoplus_\omega \mathbb{Z},$ although $\bigoplus_\omega \mathbb{Z}$ has a decomposition basis, and $\prod_\omega \mathbb{Z}$ does not \cite{Bar}.  The observation that a partial isomorphism involves finitely generated sets that may be extended motivates the following definition.  Let $G$ be a module over a principal ideal domain.  We say $\mathcal{C}$ is a {\em partial decomposition basis} for $G$ if
\begin{itemize}	
\item[\rm{(i)}]$\mathcal{C}$ is a nonempty collection of finite subsets of $G$,
	
\item[\rm{(ii)}]if $X \in \mathcal{C}$, then $X$ is a decomposition set, and
	
\item[\rm{(iii)}]if $X \in \mathcal{C}$ and $x\in G$, then there is a $Y \in \mathcal{C}$
			such that $X \subseteq Y$ and $x \in \langle Y\rangle ^0$.
\end{itemize}

It follows from this definition that the class of modules with partial decomposition bases is closed under $\equiv_\infty$. First we need a lemma.

\begin{lemma} \label{lemma1} Let $G$ and $H$ be modules over a principal ideal domain $R$ and $I$: $G \cong_pH$.  Then if $f \in I$, $f$ is height-preserving. 
\end{lemma}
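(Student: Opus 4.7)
The plan is to prove, by transfinite induction on the ordinal $\alpha$, the biconditional
\[
x \in p^\alpha G \iff f(x) \in p^\alpha H
\]
for every $f \in I$ and every $x \in \domain(f)$. Once this is established for all $\alpha$, taking $\alpha = |x|+1$ and $\alpha = |f(x)|+1$ yields $|x|=|f(x)|$, as desired.

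The base case $\alpha = 0$ is trivial since $p^0 G = G$ and $p^0 H = H$. For the limit case, the definition $p^\alpha G = \bigcap_{\beta<\alpha} p^\beta G$ (and analogously for $H$) reduces the claim directly to the inductive hypotheses at $\beta<\alpha$, so no work beyond bookkeeping is required. The substantive step is the successor case $\alpha = \beta+1$. Suppose $x \in p^{\beta+1}G$, so $x = py$ for some $y \in G$. Using the back-and-forth property of $I$, extend $f$ to some $g \in I$ with $y \in \domain(g)$. Since $g$ is an isomorphism of submodules, $g(x) = g(py) = pg(y)$, and since $g\supseteq f$, we have $g(x)=f(x)$. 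The inductive hypothesis applied to $g$ at $\beta$ gives $g(y) \in p^\beta H$, hence $f(x) = pg(y) \in p^{\beta+1}H$. The reverse implication is symmetric: if $f(x) \in p^{\beta+1}H$, write $f(x) = pz$ with $z \in H$ and use the ``back'' half of the back-and-forth property to extend $f$ to some $g\in I$ with $z \in \range(g)$; then $g(g^{-1}(z)) = z$ and $pg^{-1}(z)$ is a preimage of $f(x)$ which must equal $x$ since $g$ is injective, so by induction $x \in p^{\beta+1}G$.

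The main obstacle, such as it is, lies in this successor step: an arbitrary partial isomorphism between submodules need not preserve heights, because having a $p$-th root is a property of the ambient module rather than of the submodule, so the argument genuinely requires the back-and-forth property to transport the $p$-th root witnesses across $f$. Everything else is routine transfinite induction driven by the inductive definition of $p^\alpha G$.
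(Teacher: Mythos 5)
Your proof is correct and takes essentially the same approach as the paper: a transfinite induction on $\alpha$ using the back-and-forth property to pull a $p$th-root witness into the domain of an extension of $f$, the only cosmetic difference being that you prove the biconditional $x\in p^\alpha G \iff f(x)\in p^\alpha H$ directly whereas the paper proves the one-way implication $|x|_p\ge\alpha \Rightarrow |f(x)|_p\ge\alpha$ and then applies it to $f^{-1}$. One small imprecision worth fixing: in the successor step you should write $x=py$ with $y\in p^\beta G$ (not merely $y\in G$), which is possible since $p^{\beta+1}G=p(p^\beta G)$; otherwise the inductive hypothesis tells you nothing about $g(y)$, and the same remark applies to the choice of $z$ in the reverse direction.
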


\begin{proof}
Let $p$ be a prime in $R$. 
 We will prove by induction on $\alpha$ that if $x \in G$ and $|x|_p \ge \alpha$ then $|f(x)|_p \ge \alpha$ for all $f \in I$. Suppose $\alpha$ is a successor ordinal, say $\alpha = \beta + 1$, 
 $x \in G$ with $|x|_p \ge \alpha$ and $f \in I$.
 Then $x = py$ for some $y$ such that $|y|_p \ge \beta$.  
 Choose a $g \in I$ such that $f \subseteq g$ and $y \in \domain(g)$.  
 Then $f(x) = g(x) = pg(y)$ and $|g(y)|_p \ge \beta$ by the induction hypothesis, so $|f(x)|_p \ge \beta + 1 = \alpha$, as required. 
 Now suppose $\alpha$ is a limit ordinal.  Then for any $\beta < \alpha$, $x \in p^\beta G$, so by induction $|f(x)|_p \ge \beta$.  
 But since $\beta$ was arbitrary, we see that  $|f(x)|_p \ge \alpha$.  This completes the induction and proves $|f(x)|_p \ge |x|_p$. Applying the same argument to $f^{-1}$, we get $|f(x)|_p = |x|_p$. 
\end{proof}

\begin{theorem}\label{thm6} Let $G$ and $H$ be modules over a principal ideal domain. Suppose $G \cong _pH$ and $G$ has a partial decomposition basis.  Then so does $H$. Specifically, if $\C$ is a partial decomposition basis for $G$, and $I:G\cong_p H$, then
 $$\mathcal{C}' = \{f(X) : X \in \mathcal{C}, f \in I, X \subseteq domain(f)\}$$ 
 is a partial decomposition for $H$.

\end{theorem}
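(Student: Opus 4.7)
The plan is to verify directly the three defining conditions of a partial decomposition basis for $\mathcal{C}'$, leaning on Lemma~\ref{lemma1} (every $f \in I$ is height-preserving) and the back-and-forth property of $I$ to move between $G$ and $H$ at will.

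First I would check that $\mathcal{C}'$ is nonempty and consists of finite subsets of $H$. Finiteness is automatic since each $X \in \mathcal{C}$ is finite and $f$ is a function. To see $\mathcal{C}' \neq \emptyset$, pick any $X \in \mathcal{C}$ and any starting $f_0 \in I$, then iterate the back-and-forth property finitely many times (once per element of $X$) to obtain $f \in I$ with $X \subseteq \domain(f)$; this produces $f(X) \in \mathcal{C}'$.

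Next I would show each $Y' = f(X) \in \mathcal{C}'$ is a decomposition set in $H$. Since $f$ is an isomorphism on its domain, $f(X)$ is an independent set of infinite-order elements. For the valuation condition, given any $a_1,\dots,a_n \in R$, $x_1,\dots,x_n \in X$, and any prime $p \in R$, apply $f$-equivariance and Lemma~\ref{lemma1} to get
\[
|a_1 f(x_1)+\cdots+a_n f(x_n)|_p = |f(a_1x_1+\cdots+a_nx_n)|_p = |a_1x_1+\cdots+a_nx_n|_p,
\]
which equals $\min_i |a_i x_i|_p = \min_i |a_i f(x_i)|_p$ by the decomposition-set property of $X$ and another application of Lemma~\ref{lemma1}.

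The main step, condition (iii), is where the back-and-forth property really pulls its weight. Given $Y' = f(X) \in \mathcal{C}'$ and $y \in H$, first extend $f$ through the back-and-forth to some $g \in I$ with $y \in \range(g)$, say $g(x) = y$. Apply property (iii) for $\mathcal{C}$ in $G$ to $X$ and $x$ to obtain $Z \in \mathcal{C}$ with $X \subseteq Z$ and $x \in \langle Z\rangle^0$. Then extend $g$ again (finitely, one step per new element of $Z$) to some $h \in I$ with $Z \subseteq \domain(h)$. Then $h(Z) \in \mathcal{C}'$ contains $f(X) = h(X) = Y'$, and from $ax \in \langle Z\rangle$ for some nonzero $a \in R$ we get $ay = h(ax) \in \langle h(Z)\rangle$, so $y \in \langle h(Z)\rangle^0$, as needed.

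No single step is truly hard; the only subtle point is being careful that after each back-and-forth extension we are still working with a member of $I$ (so Lemma~\ref{lemma1} and the back-and-forth remain available), and that when we extend $f$ to $h$ we preserve $f(X) = h(X)$ because $h$ extends $f$. Once those bookkeeping points are made explicit, the three properties all fall out.
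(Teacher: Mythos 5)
Your proof is correct and follows essentially the same route as the paper: nonemptiness via finite back-and-forth extension, the decomposition-set property via Lemma~\ref{lemma1} (height preservation) together with the isomorphism property, and condition (iii) by extending $f$ to reach $y$, pulling back to $G$, invoking property (iii) there, and extending again over the resulting finite set. One small bonus: your insistence on extending so that $Z \subseteq \domain(h)$ (rather than $\range$) is the correct reading and quietly repairs a typo in the paper's own write-up of this step.
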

\begin{proof} 
Note that $\C'\ne \emptyset$ since $\C \ne \emptyset$ and $I \ne \emptyset$ and any $f \in I$ may be extended successively to each element of any $X \in \C$. Note also that $f(X)$ is an independent set of elements of infinite order since $f$ is an isomorphism. By the lemma, if $f\in I$, then $f$ is height-preserving, so $f(X)$ is also a decomposition set for any $X \in \C$. Now let $f(X) \in \mathcal{C}'$ and $x \in H$.  Choose $g \in I$ such that $f \subseteq g$ and $x \in \range(g)$, say $x=g(y)$.  Then since $X \in \mathcal{C}$, we may choose $Y \in \mathcal{C}$ such that $X \subseteq Y$ and $ y \in \langle Y\rangle ^0$.  Since $Y$ is finite, there is a $\tilde{g}\in I$ such that $g \subseteq\tilde{g}$ and $ Y \subseteq \range(\tilde{g})$.  Then $x=\tilde{g}(y) \in \langle \tilde{g}(Y)\rangle ^0$.  Finally $f(X) = \tilde{g}(X) \subseteq \tilde{g}(Y)$ and $\tilde{g}(Y) \in \mathcal{C}'$, so $\mathcal{C}'$ is a partial decomposition basis. 
\end{proof}

It is clear that if $G$ has a decomposition basis $X$, then the set of all finite subsets of $X$ forms a partial decomposition basis for $G$.  Thus we have defined a class closed under $ \equiv_\infty$ and containing any modules with a decomposition basis.  A module may have more than one partial decomposition basis, and the following theorem allows us to relate elements of two such bases.

\begin{theorem} \label{thm7} Let $G$ be a module over a principal ideal domain $R$ which has partial decomposition bases $\mathcal{C}$ and $\mathcal{C}'$. Let $X \in \mathcal{C}$ and $Y \in \mathcal{C}'$.  Then there are decomposition sets $X'$ and $Y'$ such that $X \subseteq X', Y \subseteq Y', X'$ and $Y'$ are unions of ascending chains of elements of $\mathcal{C}$ and $\mathcal{C}'$ respectively, and $\langle X'\rangle ^0 = \langle Y'\rangle ^0$. 
\end{theorem}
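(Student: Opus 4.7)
The plan is a back-and-forth construction between the two partial decomposition bases. I will build ascending chains $X=X_0\subseteq X_1\subseteq X_2\subseteq\cdots$ in $\mathcal C$ and $Y=Y_0\subseteq Y_1\subseteq Y_2\subseteq\cdots$ in $\mathcal C'$ with the interleaving property that every element of $Y_n$ lies in $\langle X_{n+1}\rangle^0$ and every element of $X_n$ lies in $\langle Y_{n+1}\rangle^0$. Then $X'=\bigcup_n X_n$ and $Y'=\bigcup_n Y_n$ will be the desired decomposition sets.

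For the inductive step, suppose $X_n\in\mathcal C$ and $Y_n\in\mathcal C'$ have already been chosen. Condition (iii) in the definition of a partial decomposition basis only lets me absorb one new element at a time, so I enumerate $Y_n=\{y_1,\dots,y_k\}$ and apply (iii) iteratively: starting from $Z_0=X_n$, choose $Z_i\in\mathcal C$ with $Z_{i-1}\subseteq Z_i$ and $y_i\in\langle Z_i\rangle^0$, and set $X_{n+1}=Z_k$. By construction $X_{n+1}\in\mathcal C$, $X_n\subseteq X_{n+1}$, and every $y_i\in\langle X_{n+1}\rangle^0$. I then perform the symmetric construction on the $\mathcal C'$ side, enumerating $X_{n+1}$ and using (iii) for $\mathcal C'$ repeatedly to obtain $Y_{n+1}\in\mathcal C'$ with $Y_n\subseteq Y_{n+1}$ and $X_{n+1}\subseteq\langle Y_{n+1}\rangle^0$.

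It remains to verify the four conclusions for $X'$ and $Y'$. The containments $X\subseteq X'$, $Y\subseteq Y'$ and the chain descriptions are immediate. $X'$ is a decomposition set because the defining conditions (independence, infinite order, and the $\min$ formula for $p$-heights) are properties of finite subsets of $X'$, each of which already lies in some $X_n\in\mathcal C$; similarly for $Y'$. For the equality $\langle X'\rangle^0=\langle Y'\rangle^0$, recall that $\langle Y'\rangle^0$ is a submodule of $G$ that is saturated in the sense that $az\in\langle Y'\rangle^0$ with $a\in R\setminus\{0\}$ forces $z\in\langle Y'\rangle^0$. Any $x\in X'$ lies in some $X_n$, hence in $\langle Y_{n+1}\rangle^0\subseteq\langle Y'\rangle^0$, so $\langle X'\rangle\subseteq\langle Y'\rangle^0$ and therefore $\langle X'\rangle^0\subseteq\langle Y'\rangle^0$; the reverse inclusion is symmetric. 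The only real obstacle is the bookkeeping in the inductive step: because (iii) admits only single-element extensions, one must iterate it $|Y_n|$ (respectively $|X_{n+1}|$) times to absorb all of the current finite set, but this is finite work at each stage and the back-and-forth then closes up as expected.
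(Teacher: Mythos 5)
Your proof is correct and takes essentially the same approach as the paper's: an interleaved ascending construction $X_0\subseteq X_1\subseteq\cdots$ in $\mathcal C$ and $Y_0\subseteq Y_1\subseteq\cdots$ in $\mathcal C'$ with each $Y_n\subseteq\langle X_{n+1}\rangle^0$ and $X_{n+1}\subseteq\langle Y_{n+1}\rangle^0$, followed by taking unions. You are in fact slightly more explicit than the paper at two points: you spell out that condition (iii) in the definition of partial decomposition basis only absorbs one element at a time and must be iterated finitely often at each stage (the paper silently treats this as obvious), and for $\langle X'\rangle^0\subseteq\langle Y'\rangle^0$ you argue via the observation that $\langle Y'\rangle^0$ is saturated (so $\langle X'\rangle\subseteq\langle Y'\rangle^0$ already forces $\langle X'\rangle^0\subseteq\langle Y'\rangle^0$), whereas the paper unwinds the definition of $(\cdot)^0$ directly by clearing denominators. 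Both are fine; the substance is identical.
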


\begin{proof}
We will define, by induction on $i \in \omega$, sets $X_i \in \mathcal{C}$, $Y_i \in \mathcal{C}'$. Let $X_0 = X$ and $Y_0 = Y$. Suppose $X_i$ and $Y_i$ have been chosen. Choose $X_{i+1} \in \mathcal{C}$ such that $X_i \subseteq X_{i+1}$ and $Y_i \subseteq \langle X_{i+1}\rangle ^0$. Then choose similarly $Y_{i+1} \in \mathcal{C}'$ such that $Y_i \subseteq Y_{i+1}$ and $X_{i+1} \subseteq \langle Y_{i+1}\rangle ^0$. 

Let $X' = \bigcup_{i \in \omega}X_i$ and $Y' = \bigcup_{i \in \omega} Y_i$. We claim that
$\langle X'\rangle ^0 \subseteq \langle Y'\rangle ^0$. Let $z \in \langle X'\rangle ^0$.  
Then for some $n \in \mathbb{Z}$, 
$a, a_1,..., a_n \in R\setminus \{0\}$ and $x_1,..., x_n \in X'$, 
$az = a_1x_1+\cdots+a_nx_n$. 
 Choose $X_i \supseteq\{x_1,..., x_n\}$.  
 Then $ \{x_1,..., x_n\}  \subseteq  \langle Y_i\rangle ^0$, 
 so $\{bx_1,...,bx_n\} \subseteq \langle Y_i\rangle $ for some $b \in R\setminus\{0\}$.  
 But then $baz \in \langle Y_i\rangle  \subseteq \langle Y'\rangle $, 
so $z \in \langle Y'\rangle ^0$.  
 Similarly, $\langle Y'\rangle ^0 \subseteq \langle X'\rangle ^0$. 
\end{proof}

\subsection{The Invariant $\hat{w}(e, G)$}

We now introduce the invariant that will be used for the local classification theorem.  Thus we assume in the rest of this paper that all modules are modules over $\Z_p$ for some prime $p$ unless otherwise stated.

We define the analogue of Warfield's invariant for a module $G$ with a partial decomposition basis $\mathcal{C}$.  Let \begin{eqnarray*}\hat{w}_{\mathcal{C}}(e,G) = \text{ the maximum } n \text{ such that there is an } X \in \mathcal{C} \text{ and } x_1,..., x_n \in X \text{ such that } \\ U(x_i) \in e \text{ for }1\le i \le n, \text{ if such a maximum exists, and } \omega \text{  otherwise. }\end{eqnarray*}

\begin{theorem} \label{thm8} Let $G$ be a module with partial decomposition bases $\mathcal{C}$ and $\mathcal{C}'$.  Suppose for some equivalence class $e$ of Ulm sequences $\hat{w}_{\mathcal{C}}(e,G) \ge n$.  Then for any $Y \in \mathcal{C}'$ there is a $\tilde{Y} \in \mathcal{C}'$ such that $Y \subseteq \tilde{Y}$ and $\tilde{Y}$ contains at least $n$ elements $y$ such that $U(y) \in e$.
\end{theorem}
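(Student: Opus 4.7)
First, since $\w_{\C}(e,G)\ge n$, I can fix $X\in\C$ together with elements $x_1,\dots,x_n\in X$ such that $U(x_i)\in e$ for each $i$. My plan is to apply Theorem~\ref{thm7} to $X$ and the given $Y$, obtaining decomposition sets $X'\supseteq X$ and $Y'\supseteq Y$ with $\langle X'\rangle^0=\langle Y'\rangle^0$, expressed as ascending unions $X'=\bigcup_j X_j$ with $X_j\in\C$ and $Y'=\bigcup_j Y_j$ with $Y_j\in\C'$ (where $Y_0=Y$ by the construction in that proof). Since each $x_i\in\langle X'\rangle\subseteq\langle Y'\rangle^0$, there is $a_i\in R\setminus\{0\}$ with $a_ix_i\in\langle Y'\rangle$. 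Set $Y'_e=\{y\in Y':U(y)\in e\}$. Because $Y'$ is an independent set of elements of infinite order, $\langle Y'\rangle=\bigoplus_{y\in Y'}Ry$, so each $a_ix_i$ splits uniquely as $z_i+w_i$ with $z_i\in\langle Y'_e\rangle$ and $w_i\in\langle Y'\setminus Y'_e\rangle$.

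The heart of the argument is the claim that $z_1,\dots,z_n$ are $R$-linearly independent. Granted this, $\langle Y'_e\rangle$ contains $n$ independent elements of a free module of rank $|Y'_e|$, forcing $|Y'_e|\ge n$. I would then pick $n$ elements $y_{k_1},\dots,y_{k_n}\in Y'_e$; since $Y'=\bigcup_j Y_j$ is ascending with $Y\subseteq Y_0$, those finitely many elements together with $Y$ lie in some $Y_j$, and the choice $\tilde Y:=Y_j\in\C'$ satisfies the conclusion.

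To verify the independence claim, suppose $\sum_i c_iz_i=0$ with some $c_i\ne 0$. Then $u:=\sum_i c_ia_ix_i=\sum_i c_iw_i$ lies in $\langle Y'\setminus Y'_e\rangle$; it is nonzero because $\{x_i\}$ is independent of infinite order and $c_ia_i\ne 0$ for at least one $i$. Viewing $u$ as a combination of the $x_i$'s and using the decomposition-set property of $X'$, for all sufficiently large $k$ the minimum $\min_i|p^kc_ia_ix_i|_p$ is attained at some fixed index $i^*$, which gives $U(u)\sim U(x_{i^*})\in e$. Viewing the same element as $\sum_j d_jy_j$ with $y_j\in Y'\setminus Y'_e$ and applying the decomposition-set property of $Y'$, for $k$ large the minimum is attained at a fixed $y_{j^*}$, yielding $U(u)\sim U(y_{j^*})$ and hence $U(y_{j^*})\in e$, contradicting $y_{j^*}\notin Y'_e$.

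The main obstacle is precisely this independence step: it relies on the fact, special to decomposition sets, that the Ulm sequence of a nonzero linear combination is eventually determined by (equivalent to) that of a single summand, and this must be applied symmetrically on the $X'$-side and the $Y'$-side to obtain the contradiction. Everything else---the choice of the $a_i$ from $\langle Y'\rangle^0$, the direct-sum decomposition of $\langle Y'\rangle$, and the passage from finitely many selected elements back to a member of $\C'$---is straightforward bookkeeping given Theorem~\ref{thm7} and the definition of a partial decomposition basis.
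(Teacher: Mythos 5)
Your overall plan---reduce to the situation of Theorem~\ref{thm7}, then argue inside the rank-$|X'|$ module $\langle X'\rangle^0=\langle Y'\rangle^0$---matches the paper up to that point. Where you diverge is that the paper then simply invokes Warfield's theorem that $w(e,\cdot)$ is independent of the choice of decomposition basis, applied to $\langle X'\rangle^0$, whereas you attempt to re-derive that invariance by a direct linear-algebra argument. That re-derivation has a genuine gap in its central step.

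The flaw is in the sentence ``Viewing the same element as $\sum_j d_jy_j$ with $y_j\in Y'\setminus Y'_e$ \dots for $k$ large the minimum is attained at a fixed $y_{j^*}$, yielding $U(u)\sim U(y_{j^*})$.'' That inference is valid only when the Ulm sequences of the summands are pairwise equivalent: if $\alpha\sim\beta$ then after a shift one of them dominates the other termwise, so a pointwise minimum over finitely many pairwise-equivalent sequences is eventually equal to one of them. You use this correctly on the $X'$-side, where every $U(c_ia_ix_i)$ lies in the single class $e$. But the elements $y_j\in Y'\setminus Y'_e$ can lie in \emph{distinct} equivalence classes, and then the minimum need not stabilize. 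Concretely, take a decomposition set $\{y_1,y_2\}$ with $U(y_1)=(0,10,20,30,\dots)$ and $U(y_2)=(5,6,25,26,45,46,\dots)$; then $U(y_1+y_2)=(0,6,20,26,40,46,\dots)$, which is not equivalent to $U(y_1)$ or to $U(y_2)$ (neither its residues modulo $10$ nor its difference pattern match either sequence under any shift). So the assertion ``the Ulm sequence of a nonzero combination is equivalent to that of one summand'' is false for decomposition sets in general, and your contradiction never arises. The independence of the $z_i$ is in fact true, but proving it is essentially equivalent to proving Warfield's invariance of $w(e,\cdot)$; it cannot be dispatched by this termwise-minimum argument alone. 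The clean way to finish, and what the paper does, is to observe that $X'$ and $Y'$ are both decomposition bases of $\langle X'\rangle^0$ and cite Warfield's result directly.
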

\begin{proof}  

Since $\hat{w}_\mathcal{C}(e,G) \ge n$, there is $X \in \mathcal{C}$ containing at least $n$ elements $x$ such that $U(x) \in e$.  Choose $X'$ and $Y'$ as in Theorem \ref{thm7}.  Consider the submodule $\langle X'\rangle ^0 = \langle Y'\rangle ^0$.  Since elements in $\langle X'\rangle ^0$ have the same height in $\langle X'\rangle ^0$ as they do in $G$, $X'$ is a decomposition set for $\langle X'\rangle ^0$, and consequently a decomposition basis.  Similarly, $Y'$ is a decomposition basis. Now $w(e,\langle X'\rangle ^0) \ge n$, since $X'$ contains at least $n$ elements $x$ such that $U(x) \in e$. But since as Warfield proved, $w(e,\langle X'\rangle ^0)$ 
 is independent of the choice of decomposition basis, and since $\langle X'\rangle^0 =\langle Y'\rangle^0$, $Y'$ also contains $n$ elements $y$ such that $U(y) \in e$.  Since $Y'$ is the union of an ascending chain of elements of $\mathcal{C}'$, we may choose an element of $\mathcal{C}'$ containing $Y$ and these $n$ elements as our required $\tilde{Y}$. 
\end{proof}

\begin{corollary} \label{cor1}
Let $G$ be a module with partial decomposition bases $\mathcal{C}$ and $\mathcal{C}'$.  Then for any equivalence class $e$ of Ulm sequences, $\hat{w}_\mathcal{C}(e,G) = \hat{w}_{\mathcal{C}'}(e,G)$.
\end{corollary}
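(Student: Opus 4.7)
The plan is to deduce this directly from Theorem \ref{thm8} by a symmetric two-sided argument, since Theorem \ref{thm8} already contains essentially all the content; only a careful unpacking of the definition of $\w_\C$ remains.

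First, I would observe that $\C'$ is nonempty by the definition of a partial decomposition basis, so we may fix some $Y_0 \in \C'$ to feed into Theorem \ref{thm8}. For any natural number $n$ with $\hat{w}_\C(e,G) \ge n$, Theorem \ref{thm8} produces a $\tilde Y \in \C'$ containing at least $n$ elements whose Ulm sequences lie in $e$, and therefore $\hat{w}_{\C'}(e,G) \ge n$ as well. Swapping the roles of $\C$ and $\C'$ in Theorem \ref{thm8} gives the reverse implication: $\hat{w}_{\C'}(e,G) \ge n$ implies $\hat{w}_\C(e,G) \ge n$. Hence for every finite $n$, the statements ``$\hat{w}_\C(e,G) \ge n$'' and ``$\hat{w}_{\C'}(e,G) \ge n$'' are equivalent.

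Now I would split into cases according to the definition of $\hat{w}$. If $\hat{w}_\C(e,G) = \omega$, then by definition no finite maximum exists, so there exist witnesses of size $\ge n$ in $\C$ for every $n \in \omega$; by the previous paragraph the same holds in $\C'$, so no finite maximum exists for $\C'$ either, forcing $\hat{w}_{\C'}(e,G) = \omega$. If instead $\hat{w}_\C(e,G) = n < \omega$, then on one hand $\hat{w}_{\C'}(e,G) \ge n$ from the equivalence above, and on the other hand $\hat{w}_{\C'}(e,G) \le n$: otherwise $\hat{w}_{\C'}(e,G) \ge n+1$ would, again by the symmetric application of Theorem \ref{thm8}, give $\hat{w}_\C(e,G) \ge n+1$, contradicting the maximality of $n$. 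Thus equality holds in this case as well.

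There is no real obstacle here; the work has been done in Theorem \ref{thm8}. The only point requiring a little care is keeping the two conventions in the definition of $\hat{w}_\C$ (``the maximum $n$ if it exists, and $\omega$ otherwise'') straight so that the equivalence of the predicates $\hat{w}_\C(e,G) \ge n$ translates cleanly into the equality of the two invariants in both the finite and the infinite case.
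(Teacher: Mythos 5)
Your proof is correct and follows essentially the same route as the paper's: apply Theorem \ref{thm8} in one direction to get $\hat{w}_{\mathcal{C}'}(e,G) \ge n$ whenever $\hat{w}_{\mathcal{C}}(e,G) \ge n$, and then invoke symmetry. The paper's version is more terse, but the underlying argument and case analysis are the same.
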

\begin{proof}
Suppose $\hat{w}_\mathcal{C}(e,G) \ge n$.  Then by the theorem, with any $Y \in \mathcal{C}'$, $\hat{w}_{\mathcal{C}'}(e,G) \ge n$.  So if $\hat{w}_\mathcal{C}(e,G) = \omega$, $\hat{w}_{\mathcal{C}'}(e,G) = \omega,$ and if $\hat{w}_\mathcal{C}(e,G)$ is finite, $\hat{w}_{\mathcal{C}'}(e,G) \ge \hat{w}_\mathcal{C}(e,G)$. The equality follows by symmetry.
\end{proof}

The corollary shows that $\hat{w}_\mathcal{C}(e,G)$ is indeed an invariant, so we will drop the subscript $\mathcal{C}$ in what follows.  We may now restate Theorem \ref{thm8}. 

\begin{theorem} \label{thm8'}
Let $G$ be a module with partial decomposition basis $\mathcal{C}$.  Suppose for some equivalence class $e$ of Ulm sequences $\hat{w}(e,G) \ge n$.  Then for any $X \in \mathcal{C}$ there is a $\tilde{X} \in \mathcal{C}$ such that $X \subseteq \tilde{X}$ and $\tilde{X}$ contains at least $n$ elements $x$ such that $U(x) \in e$. 
\end{theorem}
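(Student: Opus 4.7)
The plan is to observe that Theorem \ref{thm8'} is essentially a specialization of Theorem \ref{thm8}, now legitimized by Corollary \ref{cor1}. The notational shift is that Corollary \ref{cor1} allows us to drop the subscript $\mathcal{C}$ from the invariant, so the hypothesis $\hat{w}(e,G) \ge n$ is meaningful without reference to a distinguished partial decomposition basis.

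My approach is to apply Theorem \ref{thm8} with the partial decomposition bases taken to be equal: set $\mathcal{C}' = \mathcal{C}$. By Corollary \ref{cor1}, the unsubscripted invariant $\hat{w}(e,G)$ equals $\hat{w}_\mathcal{C}(e,G)$, so the hypothesis gives $\hat{w}_\mathcal{C}(e,G) \ge n$. Given any $X \in \mathcal{C}$, view the same $X$ as an element of $\mathcal{C}'$ and invoke Theorem \ref{thm8} with $Y = X$. This produces a $\tilde{Y} \in \mathcal{C}' = \mathcal{C}$ with $X \subseteq \tilde{Y}$ and containing at least $n$ elements whose Ulm sequences lie in $e$. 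Renaming $\tilde{Y}$ as $\tilde{X}$ delivers the conclusion.

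There is no real obstacle here, since the technical content — passing from $X$ to a larger element of a possibly different partial decomposition basis via the common hull $\langle X'\rangle^0 = \langle Y'\rangle^0$ produced by Theorem \ref{thm7}, and then invoking Warfield's invariance theorem — was already handled inside the proof of Theorem \ref{thm8}. The only thing to verify is that the specialization $\mathcal{C}' = \mathcal{C}$ is genuinely permitted by the statement of Theorem \ref{thm8}, which it is, since the two bases are allowed to be arbitrary (in particular equal). Thus the proof is a one-line deduction from Theorem \ref{thm8} and Corollary \ref{cor1}.
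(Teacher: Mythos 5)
Your proposal is correct and matches the paper exactly: the paper introduces Theorem \ref{thm8'} with the remark that, having established Corollary \ref{cor1}, ``we may now restate Theorem \ref{thm8},'' which is precisely your specialization to $\mathcal{C}' = \mathcal{C}$ with the subscript dropped. No further argument is needed.
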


We would like $\hat{w}(e,G)$ to be invariant under $\equiv _\infty$.  This is indeed the case, as we will show by an appropriate choice of partial decomposition basis. 

\begin{corollary}\label{cor2} Let $G$ and $H$ be modules such that $G$ has a partial decomposition basis $\mathcal{C}$ and $I: G \cong_p H$.  Then for any equivalence class $e$ of Ulm sequences, $\hat{w}(e,H)$ is defined and $\hat{w}(e,H) = \hat{w}(e,G)$.
\end{corollary}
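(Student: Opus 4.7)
The plan is to compare $\hat{w}(e,G)$ directly with $\hat{w}_{\mathcal{C}'}(e,H)$, where $\mathcal{C}'$ is the partial decomposition basis for $H$ produced by Theorem \ref{thm6}, namely $\mathcal{C}' = \{f(X) : X \in \mathcal{C},\ f \in I,\ X \subseteq \text{domain}(f)\}$. The existence of this basis shows $\hat{w}(e,H)$ is defined, and by Corollary \ref{cor1} it coincides with $\hat{w}_{\mathcal{C}'}(e,H)$, so it is enough to verify $\hat{w}_\mathcal{C}(e,G) = \hat{w}_{\mathcal{C}'}(e,H)$.

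The key observation is that every $f \in I$ preserves Ulm sequences. Indeed, by Lemma \ref{lemma1} each $f$ preserves $p$-heights, and since $f$ is a module homomorphism, $f(p^i x) = p^i f(x)$ for all $i \in \omega$; hence $U(x) = U(f(x))$ for every $x \in \text{domain}(f)$. In particular $U(x) \in e$ if and only if $U(f(x)) \in e$.

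With this in hand, I plan to establish both inequalities directly. For $\hat{w}_\mathcal{C}(e,G) \le \hat{w}_{\mathcal{C}'}(e,H)$: given $X \in \mathcal{C}$ containing $n$ elements $x_1,\ldots,x_n$ with $U(x_i) \in e$, apply the back-and-forth property of $I$ finitely many times (once for each element of the finite set $X$) starting from any $f_0 \in I$ to obtain $f \in I$ with $X \subseteq \text{domain}(f)$. Then $f(X) \in \mathcal{C}'$ and the $f(x_i)$ witness $\hat{w}_{\mathcal{C}'}(e,H) \ge n$. For the reverse direction, any $Y \in \mathcal{C}'$ is of the form $f(X)$ with $X \in \mathcal{C}$ and $f \in I$; if $y_1,\ldots,y_n \in Y$ have $U(y_i) \in e$, then the preimages $f^{-1}(y_i) \in X$ have the same Ulm sequences, so $\hat{w}_\mathcal{C}(e,G) \ge n$. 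Running this for every finite $n$ handles both the finite and the $\omega$ cases of the definition of $\hat{w}$.

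There is no serious obstacle: all of the heavy lifting (height preservation, existence of $\mathcal{C}'$, independence from the choice of partial decomposition basis) has already been done in Lemma \ref{lemma1}, Theorem \ref{thm6}, and Corollary \ref{cor1}. The only point that requires any care is keeping the ``$n$ or $\omega$'' dichotomy in the definition of $\hat{w}$ straight, which is handled uniformly by reasoning with the lower bounds $\hat{w}(e,\,\cdot\,) \ge n$ for each $n \in \omega$.
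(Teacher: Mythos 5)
Your proposal is correct and follows essentially the same route as the paper: define $\mathcal{C}'$ via Theorem \ref{thm6}, note that $f$ preserves Ulm sequences by Lemma \ref{lemma1}, and transfer witnesses for $\hat{w}(e,\,\cdot\,)\ge n$ back and forth between $\mathcal{C}$ and $\mathcal{C}'$. The only difference is that you make explicit a couple of steps the paper leaves implicit (that height preservation plus the homomorphism property gives preservation of Ulm sequences, and the finite back-and-forth extension to get $X \subseteq \text{domain}(f)$), which is fine.
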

\begin{proof}
Let $\C' = \{f(X): X \in \mathcal{C}, f \in I, X \subseteq \domain(f)\}$. By Theorem \ref{thm6}, $\C'$ is a partial decomposition basis for $H$, so $\hat{w}(e,H)$ is defined for all $e$.  Suppose for a given $e$, $\hat{w}(e,G) \ge n$.  Choose $X \in \mathcal{C}$ such that there are $x_1,..., x_n \in X$ with $U(x_i) \in e$ for $1\le i \le n$.  Choose $f \in I$ such that $X \subseteq \domain(f)$.   
Then $f(X) \in \mathcal{C}'$ and since $f$ is height preserving, $U(f(x_i)) \in e$ for $1 \le i \le n$. Thus $\hat{w}(e,H) \ge n$.  Conversely, if $\hat{w}(e,H) \ge n$ then there are $f(x_1),..., f(x_n) \in f(X)$  for some $f \in I$ and $X\in \mathcal{C}$ such that $U(f(x_i)) \in e$ for $1\le i \le n$.  Thus $U(x_i) \in e$ for $1 \le i \le n$ and $x_1,..., x_n \in X \in \mathcal{C}$, so $\hat{w}(e,G) \ge n$.
\end{proof}

If a module $G$ has a partial decomposition basis, then there exists a partial decomposition basis for $G$ which is closed under taking subsets and nonzero multiples:

\begin{theorem} \label{thm9} Suppose $G$ is a module with partial decomposition basis $\mathcal{C}$.  Then  $G$ also has a partial decomposition basis $\tilde{\C}$ such that for any $X \in \tilde{\C}$, $x_1,\dots,x_n \in X$ and $a_1, \dots, a_n  \in \Z \setminus \{0\}$, $\{a_1x_1,\dots,a_nx_n\} \in \tilde{\C}$. In particular, we may take
\begin{align*}
 \tilde{\mathcal{C}} &= \{\{a_1x_1,..., a_nx_n\}: \{x_1,..., x_n\} \subseteq X \text{ for some } X \in \mathcal{C}, a_1,\dots,a_n \in \Z \setminus \{0\}\}. 
\end{align*}
\end{theorem}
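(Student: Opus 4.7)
The plan is to verify, one at a time, the three axioms (i)--(iii) of a partial decomposition basis for $\tilde{\C}$, together with the stated closure property. Axioms (i), (ii), and the closure statement should all reduce to formal checks: $\C \subseteq \tilde{\C}$ (take all coefficients equal to $1$), so $\tilde{\C}$ is nonempty and consists of finite subsets; any subset of a decomposition set is again a decomposition set; and multiplying the elements of such a subset by nonzero scalars $a_i \in \Z$ preserves independence and infinite order (since $R$ is an integral domain and each $x_i$ has infinite order) as well as the height-minimum formula (the identity $|\sum b_i(a_ix_i)|_q = \min_i|b_i(a_ix_i)|_q$ is just the original formula for $\{x_1,\ldots,x_n\}$ applied with coefficients $b_ia_i$). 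The closure statement itself is immediate: $\{b_1(a_{i_1}x_{i_1}),\ldots,b_k(a_{i_k}x_{i_k})\} = \{(b_1a_{i_1})x_{i_1},\ldots,(b_ka_{i_k})x_{i_k}\}$ already has the form required by the definition of $\tilde{\C}$, witnessed by the same $X_0 \in \C$.

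The only step with any content is the extension axiom (iii). Given $X = \{a_1x_1,\ldots,a_nx_n\} \in \tilde{\C}$ with witnessing $X_0 \in \C$ containing $\{x_1,\ldots,x_n\}$, and any $x \in G$, I plan to apply (iii) for $\C$ to $X_0$ and $x$ to obtain $Y_0 \in \C$ with $X_0 \subseteq Y_0$ and $x \in \langle Y_0 \rangle^0$. Writing $Y_0 = \{x_1,\ldots,x_n,y_{n+1},\ldots,y_m\}$, I will set
\[
Y = \{a_1x_1,\ldots,a_nx_n,y_{n+1},\ldots,y_m\},
\]
which lies in $\tilde{\C}$ (witnessed by $Y_0$ with coefficients $a_1,\ldots,a_n,1,\ldots,1$) and clearly contains $X$. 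To conclude $x \in \langle Y\rangle^0$, I will invoke the identity $\langle a_1x_1,\ldots,a_nx_n,y_{n+1},\ldots,y_m\rangle^0 = \langle x_1,\ldots,x_n,y_{n+1},\ldots,y_m\rangle^0 = \langle Y_0\rangle^0$, noted in Section~2; this single appeal to the invariance of $\langle \cdot \rangle^0$ under nonzero scaling is essentially the only non-bookkeeping input of the whole proof.

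I do not anticipate a real obstacle. The one point that might look subtle is the use in (iii) of the $\langle\cdot\rangle^0$-identity to transport $x \in \langle Y_0\rangle^0$ to $x \in \langle Y\rangle^0$, but this has already been isolated and flagged earlier in the paper as something we will use repeatedly. I therefore expect the proof to be quite short and essentially a matter of unwinding the definitions.
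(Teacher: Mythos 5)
Your proposal is correct and follows essentially the same argument as the paper: verify (i) and (ii) directly from the closure of decomposition sets under subsets and nonzero scaling, and establish (iii) by extending the witnessing set $X_0$ within $\mathcal{C}$ to some $Y_0$, replacing $x_i$ by $a_i x_i$ in $Y_0$, and using the invariance $\langle a_1x_1,\ldots,a_nx_n,y_{n+1},\ldots,y_m\rangle^0 = \langle Y_0\rangle^0$. The paper's proof is the same construction (writing $Y = \{a_1x_1,\ldots,a_nx_n\}\cup(X'\setminus\{x_1,\ldots,x_n\})$), so there is nothing to fix.
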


\begin{proof}
First, $\tilde{\mathcal{C}} \ne \emptyset$ since $\emptyset \in \tilde\C$. 
Clearly, if $X$ is a decomposition set, so is any nonzero multiple of any subset. 
If $x \in G$ and $\{a_1x_1,\dots,a_nx_n\} \in \tilde \C$, 
where $x_1,\dots,x_n \in X$ for some $X \in \C$,
choose $X' \supseteq X$ such that $x \in \langle X' \rangle^0$, 
and then if we let $Y= \{a_1x_1,\dots,a_nx_n\} \cup (X'\setminus \{x_1,\dots,x_n\}),$
$x \in \langle Y\rangle^0$ and $Y \in \tilde\C$.
Hence $\tilde{\C}$ is a partial decomposition basis. 
We claim that $\tilde \C$ satisfies the condition.
Let $\{a_1x_1,..., a_nx_n\} \in \tilde{\mathcal{C}}$, 
where $\{x_1,..., x_n\} \subseteq X$ for some $X \in \C$. 
Clearly, any subset of this set is also in $\tilde{\C}$, as is any set of multiples.  
\end{proof}

\section{The Local Classification Theorem}

\subsection{The Extension Theorem}

In order to prove the classification theorem, we will need to be able to extend certain functions for a back-and-forth argument.  One of the cases with which we are concerned is that of a function $f: S\rightarrow T$, where $S$ and $T$ are submodules of modules $G$ and $H$ respectively, and $x \in G$ has a multiple in $S$.  Our technique for doing this is based on Mackey's proof of Ulm's Theorem as given in Kaplansky \cite[Theorem 14]{Kap1}, which has been used in subsequent generalizations of Ulm's Theorem, including Warfield's work.  We will first need a lemma. 

\begin{lemma} \label{lemma2}
Let $S$ be a submodule of a module $G$ over a principal ideal domain and $\alpha$ any ordinal, $p$ any prime.  Let \begin{align*}S_\alpha &= S \cap p^\alpha G, \\S_\alpha ^* &= S_\alpha \cap \{x \in G : px \in p^{\alpha+2}G\}.\end{align*}  For $x \in S_\alpha ^*$, map $x$ onto $x-y+p^{\alpha+1}G[ p]$, where $y \in p^{\alpha+1}G$ and $px = py$.  This map is well-defined and has kernel $S_{\alpha+1}$.  The induced map $$U: S_\alpha^* / S_{\alpha +1} \rightarrow p^\alpha G[p]/p^{\alpha +1} G[p]$$ is an
isomorphism onto a submodule of $p^\alpha G[p]/p^{\alpha+1}G[p]$. The range of $U$ is not all of $p^\alpha G[p]/p^{\alpha+1}G[p]$ if and only of there exists in $p^\alpha G[p]$ an element of height $\alpha$ proper with respect to $S$. 
\end{lemma}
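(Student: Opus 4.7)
This is a standard Mackey/Kaplansky-style technical lemma, which the paper itself acknowledges by referring to Kaplansky's proof of Ulm's Theorem, so the overall strategy is to verify the four assertions mechanically, with the only real content residing in the identification of the image of $U$.

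I would handle well-definedness and the kernel computation together. For $x \in S_\alpha^*$, the hypothesis $px \in p^{\alpha+2}G$ exactly provides some $y \in p^{\alpha+1}G$ with $py = px$; then $x - y$ lies in $p^\alpha G[p]$, and any two admissible choices of $y$ differ by an element of $p^{\alpha+1}G \cap G[p] = p^{\alpha+1}G[p]$, so the image coset is independent of that choice, and additivity in $x$ is immediate by summing admissible $y$'s. For the kernel, the condition $x - y \in p^{\alpha+1}G[p]$ collapses to $x \in p^{\alpha+1}G$ because $y$ already lies in $p^{\alpha+1}G$, which gives exactly $S_{\alpha+1}$ (and conversely any $x \in S_{\alpha+1}$ admits $y := x$ and maps to zero). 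The first isomorphism theorem then yields the claimed injection $U : S_\alpha^*/S_{\alpha+1} \hookrightarrow p^\alpha G[p]/p^{\alpha+1}G[p]$.

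The crux is characterizing the image, which I would do via the key equivalence
\[
z + p^{\alpha+1}G[p] \in \mathrm{Im}(U) \iff z \in S + p^{\alpha+1}G \qquad (z \in p^\alpha G[p]).
\]
The forward direction follows because any image $x - y$ lies in $x + p^{\alpha+1}G \subseteq S + p^{\alpha+1}G$, so any coset representative $z$ of the image is also in $S + p^{\alpha+1}G$. The converse is the substantive construction: given $z = s + t$ with $s \in S$ and $t \in p^{\alpha+1}G$, setting $x := s$ and $y := -t$ makes $x \in S_\alpha^*$ (using $pz = 0$ to see $px = -pt \in p^{\alpha+2}G$), and then $x - y = z$ by design.

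Once this equivalence is in hand, the last assertion follows directly: $U$ fails to be surjective iff some $z \in p^\alpha G[p]$ lies outside $S + p^{\alpha+1}G$. Any such $z$ automatically has $|z|_p = \alpha$, for otherwise $z \in p^{\alpha+1}G \subseteq S + p^{\alpha+1}G$, and the condition $z - s \notin p^{\alpha+1}G$ for every $s \in S$ is exactly the statement that no element of the coset $z + S$ has height exceeding $\alpha$, i.e., that $z$ is proper with respect to $S$. The main obstacle I anticipate is almost entirely notational: at every step one must distinguish $p^{\alpha+1}G$ from $p^{\alpha+1}G[p]$ and keep clear which coset is being considered, so I would keep $z$ fixed as a representative of the target coset throughout the image computation to minimize the risk of error.
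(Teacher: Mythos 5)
Your argument is correct and is essentially a careful reconstruction of Kaplansky's proof for $p$-groups, which the paper cites (Kaplansky, Lemma 13) rather than reproducing. In particular, your key equivalence identifying $\mathrm{Im}(U)$ with the cosets of elements of $p^\alpha G[p]$ lying in $S + p^{\alpha+1}G$ is precisely the mechanism that argument uses to tie surjectivity of $U$ to the existence of a proper element of height $\alpha$ in $p^\alpha G[p]$.
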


Kaplanky's proof of this lemma for $p$-groups \cite[Lemma 13]{Kap1} applies just as well to modules.   

We are now ready to prove the extension theorem.  The following proof is based on Lemma B of Barwise and Eklof \cite{BE}, a similar result for torsion groups, also based on Kaplansky's proof of Ulm's Theorem.

\begin{theorem}\label{thm10}
Let $G$ and $H$ be modules over a principal ideal domain $R$, $p$ a prime in $R$, and $\varphi$ an isomorphism between a finitely generated submodule $S$ of $G$ and a submodule $T$ of $H$ that preserves $p$-heights up to $\alpha$ for some ordinal $\alpha$.  Suppose that $\hat u_p(\sigma,G) \le \hat u_p(\sigma,H)$ for all $\sigma< \alpha$.  If $x \in G$ is proper with respect to $S$, $px \in S$, and $|x|_p + 1 < \alpha$, then for a suitable $y$ in $H$, $\varphi$ can be extended to an isomorphism $\varphi* : \langle S,x\rangle  \rightarrow \langle T,y\rangle $ that preserves $p$-heights up to $\alpha$.  
\end{theorem}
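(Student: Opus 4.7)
Set $\beta = |x|_p$, so $\beta + 1 < \alpha$. Since $px \in S$ with $|px|_p \ge \beta + 1 < \alpha$, height preservation of $\varphi$ yields $|\varphi(px)|_p \ge \beta + 1$, so I can choose $y_1 \in p^\beta H$ with $py_1 = \varphi(px)$. The plan is to produce $y = y_1 + w$ with $w \in p^\beta H[p]$ such that $|y|_p = \beta$ and $y$ is proper with respect to $T$; then setting $\varphi^*(s + ax) := \varphi(s) + ay$ gives the desired extension. It is well-defined because $\langle x\rangle \cap S = \langle px\rangle$ (forced by $x \notin S$ and $px \in S$) together with $\varphi(px) = py$. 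Its preservation of $p$-heights up to $\alpha$ then follows from a standard case analysis (on whether $p \mid a$ and on $|s|_p$ versus $\beta$), using that $x$ is proper with respect to $S$, $y$ proper with respect to $T$, $|x|_p = |y|_p = \beta$, and $\varphi$ preserves heights up to $\alpha$ on $S$.

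The task of finding $y$ becomes a question inside $V_H := p^\beta H/p^{\beta+1}H$. Let $\pi_H$ denote the projection, $W_H := \pi_H(T \cap p^\beta H)$, and $E_H := \pi_H(p^\beta H[p])$. I seek $w \in p^\beta H[p]$ with $\pi_H(y_1 + w) \notin W_H$. If $\pi_H(y_1) \notin W_H$, take $w = 0$; if $\pi_H(y_1) \in W_H$ but $E_H \not\subseteq W_H$, pick any $w \in p^\beta H[p]$ with $\pi_H(w) \notin W_H$. The hard case, where the Ulm invariant hypothesis enters, is the remaining one in which $\pi_H(y_1) \in W_H$ and $E_H \subseteq W_H$; I handle it by deriving a contradiction.

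From $E_H \subseteq W_H$, Lemma~\ref{lemma2} applied to $(H,T)$ tells us the induced map $U_H : T_\beta^*/T_{\beta+1} \to p^\beta H[p]/p^{\beta+1}H[p]$ is surjective; since $T \cong S$ is finitely generated, $d := \dim T_\beta^*/T_{\beta+1}$ is finite and equals $u_p(\beta, H)$. Because $\beta + 2 \le \alpha$ and $\varphi$ preserves heights up to $\alpha$, $\varphi$ induces an isomorphism $S_\beta^*/S_{\beta+1} \cong T_\beta^*/T_{\beta+1}$, so $\dim S_\beta^*/S_{\beta+1} = d$ as well. On the other hand, $\pi_H(y_1) \in W_H$ gives $y_1 - t \in p^{\beta+1}H$ for some $t \in T \cap p^\beta H$, whence $pt \equiv \varphi(px) \pmod{p^{\beta+2}H}$. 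Since $\varphi^{-1}$ preserves heights up to $\alpha$, transferring back produces $s_1 := \varphi^{-1}(t) \in S \cap p^\beta G$ with $p(x - s_1) \in p^{\beta+2}G$. Set $S' := \langle S, x\rangle$; a direct verification using that $x$ is proper with respect to $S$ gives $(S')_{\beta+1} = S_{\beta+1}$, while $x - s_1 \in (S')_\beta^* \setminus S_\beta^*$. Therefore $\dim (S')_\beta^*/(S')_{\beta+1} \ge d + 1$, and Lemma~\ref{lemma2} applied to $(G, S')$ yields $u_p(\beta, G) \ge d + 1$. Hence $\hat{u}_p(\beta, G) \ge d + 1 > d = \hat{u}_p(\beta, H)$, contradicting the hypothesis. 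Thus a suitable $y$ always exists, completing the construction of $\varphi^*$.
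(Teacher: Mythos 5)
Your proof is correct, and it reorganizes the argument in a way that differs genuinely from the paper's. The paper first replaces $x$ by an element of $x+S$ of the same height whose $p$-multiple has height $>\sigma+1$ if possible, and then splits into two cases according to whether $|px|_p=\sigma+1$ or $|px|_p>\sigma+1$; in the first case it produces $y$ directly and uses the normalization to show $y\notin T$ and $y$ is proper, while in the second case it exhibits $x-y'\in p^\sigma G[p]$ of height $\sigma$ proper with respect to $S$, invokes the ``not onto'' criterion of Lemma~\ref{lemma2} for $(G,S)$, transfers the dimension inequality across $\varphi$, and reads off a proper $w_1$ from non-surjectivity on the $H$ side. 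You skip the normalization entirely: you fix any $y_1\in p^\beta H$ with $py_1=\varphi(px)$, work modulo $p^{\beta+1}H$, and case on whether $\pi_H(y_1)\in W_H$ and whether $E_H\subseteq W_H$. The first two cases hand you $y$ immediately, and the third is ruled out by applying Lemma~\ref{lemma2} to $(H,T)$ (to get $d=u_p(\beta,H)$ from surjectivity) and to $(G,\langle S,x\rangle)$ (to get $u_p(\beta,G)\geq d+1$ from the extra class of $x-s_1$), contradicting $\hat u_p(\beta,G)\le\hat u_p(\beta,H)$. What your version buys is the elimination of the normalization step and of the separate $y\notin T$/properness arguments of the paper's Case~1, at the mild cost of the side observation $(S')_{\beta+1}=S_{\beta+1}$ and of building the witness $x-s_1$, which lies in $(S')_\beta^*$ rather than in $p^\beta G[p]$ as in the paper. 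Both versions hinge on the same two ingredients (Kaplansky's Lemma~\ref{lemma2} and the Ulm-invariant inequality); you've just run the dimension count in contrapositive form and localized it to the enlarged submodule $\langle S,x\rangle$. The two brief glosses you make — that $\varphi^*$ is well-defined via $\langle x\rangle\cap S=\langle px\rangle$, and that height-preservation up to $\alpha$ follows by the standard $p\nmid a$ case analysis using properness of $x$ and $y$ — are both accurate and are spelled out in the paper's final paragraph.
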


\begin{proof}Let $|x|_p = \sigma$.  Since any $x' \in x+S$ satisfies $px' \in S$, we may assume $x$ has been chosen such that $|px|_p > \sigma+1$ if this is possible for any proper element of the coset.  

\underline{Case 1:} $|px|_p = \sigma + 1 < \alpha$.  Then since $px \in S$ and $\varphi$ is height preserving up to $\alpha$ on $S$, $|\varphi (px)|_p = \sigma+1$ so we may choose $y \in H$ such that $\varphi (px) = py$ and $|y|_p = \sigma$. We claim $y \notin T$.

Suppose $y \in T$.  Then $y = \varphi (z)$ for some $z \in S$. $\varphi (pz) = py =\varphi (px)$, so $px = pz$ and $p(x-z) =0$. But $x-z \in x+S$ and so $|x-z|_p \le |x|_p = \sigma$, since $x$ is proper. Also $|x-z|_p \ge \min\{|x|_p,|z|_p\} = \sigma$, so $|x-z|_p = \sigma$. This says $x-z$ is proper with respect to $S$ and $|p(x-z)|_p = \infty > |px|_p$, contradicting the assumption that $x$ is chosen so that
 $|px|_p > \sigma +1$ if possible. 
 Thus $y \notin T$.

Now we claim $y$ is proper with respect to $T$. Suppose $|y+z|_p > \sigma$ for some $z \in T$. Then since $|y|_p = \sigma$, we must have $|z|_p = \sigma$. Also $|p(y+z)|_p > |y+z|_p \ge \sigma +1$, so since $\varphi$ preserves heights up to $\alpha > \sigma +1$, $|\varphi^{-1}(z)|_p = \sigma$ and $\sigma + 1 < |\varphi^{-1}(p(y+z))|_p = |p(x+\varphi^{-1}(z))|_p$. But $|x+\varphi^{-1}(z)|_p \le \sigma$ since $x$ is proper, and $$|x+\varphi^{-1}(z)|_p \ge \min\{|x|_p,|\varphi^{-1}(z)|_p\} = \sigma.$$ Thus $x+\varphi^{-1}(z)$ has height $\sigma$, is proper and its $p$-multiple has height $> \sigma+1$, again contradicting the choice of $x$. Thus $y$ is proper with respect to $T$.

\underline{Case 2:}
$|px|_p > \sigma+1$. Then $px=py'$ for some $y' \in p^{\sigma+1}G$. Then since $$|x|_p = \sigma < \sigma+1 \le |y'|_p,$$ $|x-y'|_p= \sigma$ and $x-y' \in G[p] $. Also $x-y'$ is proper with respect to $S$, since for any $z \in S$, $$|x-y'+z|_p = |x+z|_p \le |x|_p = \sigma.$$ Therefore, by Lemma \ref{lemma2} the range of $U: S_\sigma^*/S_{\sigma+1} \rightarrow p^\sigma G[p]/p^{\sigma +1}G[p]$ is not all of $p^\sigma G[p]/p^{\sigma +1}G[p]$. 
But $S$ is finitely generated so $S_\sigma^*/S_{\sigma+1}$ is also and so \begin{align*}\dim S_\sigma^*/S_{\sigma+1} &< \min\{\dim p^\sigma G[p]/p^{\sigma +1}G[p], \omega\} = \hat{u}_p(\sigma , G) \\&\le \hat{u}_p(\sigma , H) = \min\{\dim p^\sigma H[p]/p^{\sigma +1}H[p], \omega\}.\end{align*}

Since $\varphi$ is height-preserving up to $\alpha > \sigma +1$, $\varphi$ maps $S_\sigma$ onto $T_\sigma$ and $S_\sigma ^*$ onto $T_\sigma ^*$, hence $S_\sigma ^*/S_{\sigma+1}$ is mapped onto $T_\sigma ^*/T_{\sigma+1}$, giving them the same dimension. So the induced map $V: T_\sigma^*/T_{\sigma+1} \rightarrow p^\sigma H[p]/p^{\sigma +1}H[p]$ is not onto. Thus by Lemma \ref{lemma2} again, there is an element $w_1 \in H$ such that $pw_1 = 0$, $|w_1|_p = \sigma$, and $w_1$ is proper with respect to $T$.

Now choose $w_2 \in H$ such that $pw_2 = \varphi (px)$ and $|w_2|_p > \sigma$. Such a $w_2$ exists since $\varphi$ is height preserving up to $\alpha > \sigma +1$. Let $y = w_1 + w_2$. Then $py = pw_2 = \varphi(px)$ and $|y|_p = \min\{|w_1|_p, |w_2|_p\} = \sigma$. 
Also $y$ is proper with respect to $T$ since if $z \in T$, $|y+z|_p = |(w_1+z)+w_2|_p = |w_1 +z|_p \le \sigma$.

In either case we have produced a $y$ proper with respect to $T$, satisfying $|y|_p = \sigma$ and $py = \varphi(px)$. Define $$\varphi ^* : \langle S,x\rangle  \rightarrow \langle T,y\rangle  $$ by $x \mapsto y$ and $\varphi ^*\restriction S = \varphi$. $\varphi ^*$ is clearly a well-defined isomorphism. Using the fact that $R$ is a B\' ezout domain, it may be verified that every element $z$ of $\langle S,x\rangle $ is in $S$ or for some $r$ not divisible by $p$, $rz \in x+S$. We already know that $\varphi ^*$ preserves heights of elements of $S$ up to $\a$, so we need only consider $z \in \langle S,x\rangle $ such that $rz = s+x$ for some $r \in R\setminus\{0\}$, $s \in S$, where $p$ does not divide $r$. Then $$|z|_p = |rz|_p = |s+x|_p =\min\{|s|_p,|x|_p\}$$ since $x$ is proper, and similarly, $$|\varphi ^*(z)|_p = |r\varphi ^* (z)|_p = |\varphi ^*(s+x)|_p = |\varphi (s) +y|_p =\min\{|\varphi(s)|_p, |y|_p\} =\min\{|s|_p, |x|_p\}.$$
\end{proof}

\subsection{The Classification Theorem}

Originally we followed  Warfield (see the unpublished version of \cite{W2}) in our approach. In his classification theorem for countably generated modules, he first proves the case for modules over a complete discrete valuation ring and then extends it to a general module using a mapping. The following lemma is based on Hunter and Richman \cite{HR}, and gives nice submodules without any assumption of completeness. This allows us to simplify and shorten the proof of the classification theorem considerably. In particular, in what follows we may assume that all modules are modules over $\Z_p$ for some fixed prime $p$.

\begin{lemma}\label{lemma4_7}\cite{JL2}
Let $G$ be a module with partial decomposition basis $\mathcal{C}$ and $Y \subseteq X \in \mathcal{C}$. Then $\langle Y\rangle$ is nice in $G$.
\end{lemma}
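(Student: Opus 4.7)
The plan is to show that every coset $g + \langle Y\rangle$ of $\langle Y\rangle$ in $G$ contains an element of maximal $p$-height. The strategy is to pass to a submodule of $G$ in which $Y$ extends to a full decomposition basis, and then invoke the classical niceness result in that setting.

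Given $g \in G$, after replacing $\C$ by its closure under subsets and nonzero multiples (Theorem \ref{thm9}) I may assume $Y \in \C$, and property (iii) then produces $Z \in \C$ with $Y \subseteq Z$ and $g \in \langle Z\rangle^0$. Set $M = \langle Z\rangle^0$. Because $M^0 = M$, the quotient $G/M$ is torsion-free; a standard transfinite induction then shows that any submodule with torsion-free quotient is isotype, so $|w|_G = |w|_M$ for every $w \in M$. Hence producing a proper element of $g + \langle Y\rangle$ inside $M$ is enough. Moreover $Z$ remains a decomposition set in $M$ with the same Ulm sequences (heights of elements of $\langle Z\rangle$ are preserved by passage to $\langle Z\rangle^0$), and $M/\langle Z\rangle$ is torsion by the very definition of $\langle Z\rangle^0$, so $Z$ is actually a decomposition basis for $M$.

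It therefore remains to prove: whenever $M$ has a decomposition basis $Z$ and $Y \subseteq Z$, the submodule $\langle Y\rangle$ is nice in $M$. To construct a proper element of $g + \langle Y\rangle$, choose $n$ with $p^n g \in \langle Z\rangle$, write $p^n g = \sum_{z \in Z} d_z z$, and put
\[
y^{*} = -\sum_{\substack{x \in Y \\ v_p(d_x) \,\ge\, n}} \frac{d_x}{p^n}\, x \,\in\, \langle Y\rangle,
\]
which absorbs every $p^n$-divisible $\langle Y\rangle$-contribution to $p^n g$. Then
\[
p^n(g + y^{*}) = \sum_{\substack{x \in Y \\ v_p(d_x) < n}} d_x x + \sum_{z \in Z \setminus Y} d_z z,
\]
and by the decomposition-set property its height in $M$ is the minimum of the individual term heights; this is the maximum value of $|p^n(g + y)|_M$ as $y$ ranges over $\langle Y\rangle$.

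The main obstacle is to upgrade this maximality at level $n$ to maximality of $|g + y|_M$ itself, since a jump in the Ulm sequence could in principle let some alternative $y$ surpass $|g + y^{*}|_M$ at level $0$ while falling short at level $n$. The resolution follows the Hunter--Richman approach \cite{HR}, worked out for our setting in \cite{JL2}: by tracking the decomposition-basis expansions of $p^k(g + y)$ for $k = 0, 1, \ldots, n$ and repeatedly exploiting the decomposition-set property, one shows that any $y$ strictly improving upon $y^{*}$ at level $0$ would already improve upon $y^{*}$ at level $n$, contradicting its construction. This yields the required proper element and completes the proof.
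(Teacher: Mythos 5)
Your overall approach is the same as the paper's: pass to $M=\langle Z\rangle^0$, observe that $Z$ is a decomposition basis for $M$ and that heights of elements of $M$ agree with their heights in $G$, and invoke Hunter--Richman's Lemma~8.1 to conclude $\langle Y\rangle$ is nice in $M$, hence in $G$. Two remarks. First, the preliminary step of closing $\C$ under subsets via Theorem~\ref{thm9} is unnecessary: property~(iii) applied to the given $X\in\C$ already yields $Z\supseteq X\supseteq Y$ with $g\in\langle Z\rangle^0$. Second, and more substantively, your attempted re-derivation of the Hunter--Richman lemma is not quite right. The element $y^*$ you construct maximizes $|p^n(g+y)|$, but it need not be proper in $g+\langle Y\rangle$, and the asserted implication (\emph{any $y$ improving on $y^*$ at level $0$ would improve on it at level $n$}) is false in general: two elements of the coset whose Ulm sequences merge by level $n$ (for instance with values $(2,3,5,\dots)$ and $(3,4,5,\dots)$ when $n=2$) are equal at level $n$ yet differ at level $0$. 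The Hunter--Richman argument is genuinely more delicate than maximizing at a single level $n$. Since you explicitly defer this step to \cite{HR} and \cite{JL2}, the proposal is not logically incomplete, but the sketch mischaracterizes the mechanism of the cited lemma; the paper's choice simply to cite \cite[Lemma~8.1]{HR} is cleaner and avoids this pitfall.
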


\begin{proof}
Let $x\in G$. Then there is an $X'\in\C$ such that $X\subseteq X'$ and $x\in\langle X'\rangle^0$. Clearly $X'$ is
a decomposition basis for the module $\langle X'\rangle^0$. Since $Y$ is a finite subset of $X'$, $\langle Y\rangle$ is nice in $\langle X'\rangle^0$ by \cite[Lemma 8.1]{HR}, hence the coset $x+\langle Y\rangle$ has an element $y$ of
maximal height in $\langle X'\rangle^0$. The height of $y$ is the same in $\langle X'\rangle^0$ as it is in $G$, so the proof is complete.
\end{proof}

This lemma, along with the following lemma of Warfield, will make all of the submodules of interest nice. 

\begin{lemma} \label{lemma3}\cite{W1}
If $S$ is a nice submodule of a module $G$ and $K$ is a submodule containing $S$ such that $K/S$ is finitely generated and torsion, then $K$ is also a nice submodule.
\end{lemma}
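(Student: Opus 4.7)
The plan is to verify the definition of niceness directly: show that every nonzero coset $x+K$ contains an element proper with respect to $K$. The key algebraic input I would use is that we are working over $R=\Z_p$ (the localization of $\Z$ at $p$), so any cyclic torsion $\Z_p$-module has the form $\Z_p/p^n\Z_p\cong\Z/p^n\Z$, which is finite. A finitely generated torsion $\Z_p$-module is therefore actually \emph{finite}, so $K/S$ is a finite set of cosets, say $K/S=\{k_1+S,\dots,k_m+S\}$.

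With this observation, I would decompose the coset $x+K$ as the \emph{finite} union $x+K=\bigcup_{i=1}^m (x+k_i+S)$. Given $x\notin K$ (otherwise the coset is zero and there is nothing to prove), each $x+k_i+S$ is a nonzero coset of $S$, because $x+k_i\in S\subseteq K$ would force $x\in K$. Since $S$ is nice in $G$, for each $i$ I can pick $y_i\in x+k_i+S$ of maximal $p$-height within that $S$-coset.

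Next I would take $y=y_j$ for an index $j$ realizing the maximum of $|y_1|_p,\dots,|y_m|_p$ (with $\infty$ regarded as the top). Any $z\in x+K$ lies in some $x+k_i+S$, so $|z|_p\le |y_i|_p\le |y|_p$ by choice of $y_i$ and $y$. Hence $y$ is proper with respect to $K$, and $K$ is nice.

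There is really no obstacle here: the whole argument amounts to the two-line observation that a finite union of cosets, each admitting a height-maximal element, admits a height-maximal element. The only thing to be careful with is to rule out the degenerate case $x+k_i\in S$, which follows immediately from $x\notin K$ and $S\subseteq K$. No deeper machinery from the earlier sections --- in particular, neither Lemma \ref{lemma4_7} nor the partial decomposition basis --- needs to be invoked.
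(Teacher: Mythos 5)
Your proof is correct. The paper cites Warfield \cite{W1} for this lemma and does not reproduce a proof, so there is no in-text argument to compare against; but your approach is a clean, self-contained verification. The crucial reduction is the observation that over $R=\Z_p$, a finitely generated torsion module is \emph{finite}: by the structure theorem it is a finite direct sum of cyclic modules $\Z_p/(p^{n})\cong \Z/p^{n}\Z$, each finite. With $K/S=\{k_1+S,\dots,k_m+S\}$ finite, the coset $x+K$ is a finite union $\bigcup_i (x+k_i+S)$ of $S$-cosets; you correctly check that each of these is a nonzero $S$-coset whenever $x\notin K$ (since $x+k_i\in S\subseteq K$ would force $x\in K$), so niceness of $S$ yields a height-maximal $y_i$ in each, and a maximum over the finite index set gives a height-maximal element of $x+K$. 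The handling of heights taking value $\infty$ is unproblematic since one is only maximizing over a finite set. This is exactly the two-line ``finite union of cosets with maxima has a max'' argument, and no appeal to Lemma~\ref{lemma4_7} or to partial decomposition bases is needed, as you note. The one thing worth flagging is that your argument is genuinely tied to the local setting: over a general PID (or for $R$-modules with infinite residue fields), ``finitely generated torsion'' does not imply finite, and one would instead argue by induction on the number of generators of $K/S$, adjoining one torsion element at a time; but since the lemma is invoked here only for $\Z_p$-modules, your finiteness shortcut is perfectly legitimate.
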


We will need the following lemma, which is an analogue in $L_{\infty \omega}$ of a lemma Warfield proved for countably generated modules \cite[Corollary 6.6]{W2}.

\begin{lemma}\label{lemma4}
Let $G$ be a reduced module of finite rank with a decomposition basis $\{x_1,\dots, x_n\}$. Then there is a reduced torsion module $T$ and submodules $G_i$, for $1 \le i \le n$, of $G$ such that $$G \oplus T \cong_p G_1 \oplus \cdots \oplus G_n$$ and $x_i \in G_i$.
In fact, the set $I$ of all height-preserving isomorphisms between finitely generated submodules of $G\oplus T$ and $G_1\oplus...\oplus G_n$ extending the canonical map $\langle x_1,...,x_n\rangle \to \langle x_1,...,x_n\rangle$ satisfies the back-and-forth property.
\end{lemma}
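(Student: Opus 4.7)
The plan is to emulate Warfield's rank-one decomposition argument for countably generated modules, but building a back-and-forth family $I$ rather than an actual isomorphism. The engine for each extension step is the Extension Theorem (Theorem~\ref{thm10}); the preparatory work is to choose the $G_i$ and construct $T$ so that the Ulm-invariant hypothesis of Theorem~\ref{thm10} holds throughout the argument.

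First I would set $G_i:=\langle x_i\rangle^0\subseteq G$, the rank-one submodule consisting of all elements with a nonzero multiple in $\langle x_i\rangle$. Because $\{x_1,\dots,x_n\}$ is a decomposition basis, the $G_i$ are independent in $G$, so the internal sum $G_1+\cdots+G_n$ is the direct sum $H:=G_1\oplus\cdots\oplus G_n$. Since $G$ has rank $n$, the quotient $G/H$ is torsion, so the Ulm invariants of $G$ and $H$ differ only in torsion contributions. I would then construct a reduced torsion $\Z_p$-module $T$ so that $\hat u_p(\alpha,G\oplus T)=\hat u_p(\alpha,H)$ for every ordinal~$\alpha$; if some $\hat u_p(\alpha,G)$ already exceeds $\hat u_p(\alpha,H)$, one first enlarges each $G_i$ by appropriate torsion from $G$ (possible precisely because $G/H$ is torsion) before building $T$ to balance the remaining discrepancy.

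With this setup, the canonical identification $\varphi_0\colon\langle x_1,\dots,x_n\rangle\to\langle x_1,\dots,x_n\rangle$ is height-preserving on both sides, by the decomposition-basis formula $|\sum a_ix_i|_p=\min_i|a_ix_i|_p$ together with the fact that the height of $x_i$ in $G_i=\langle x_i\rangle^0$ coincides with its height in $G$. For the back-and-forth, take $\varphi\in I$ with finitely generated domain $S\supseteq\langle x_1,\dots,x_n\rangle$ and $a\in G\oplus T$. Since $G\oplus T$ has rank $n$, some $p^k a$ lies in $S$ after first enlarging $S$ by a finite torsion correction (absorbable into $\varphi$ by the standard Ulm argument, thanks to the invariant matching). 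After replacing $a$ by a representative proper with respect to $S$ (possible because $S$ is nice by Lemma~\ref{lemma4_7} combined with Lemma~\ref{lemma3}), I apply Theorem~\ref{thm10} to $p^{k-1}a$, whose $p$-multiple lies in $S$, and iterate down the $p$-power chain until $a\in\domain(\varphi)$. At each application, the hypothesis $\hat u_p(\sigma,G\oplus T)\le\hat u_p(\sigma,H)$ holds for every $\sigma$ by construction of $T$, and the height bound $\alpha$ in Theorem~\ref{thm10} is taken larger than the lengths of the reduced modules in play, so the extended map is genuinely height-preserving. The ``forth'' direction is symmetric.

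The main obstacle is the construction of $T$ together with a careful choice of the $G_i$: the matching $\hat u_p(\alpha,G\oplus T)=\hat u_p(\alpha,H)$ requires that $H$ dominate $G$ in every truncated Ulm invariant, which may fail for the naive choice $G_i=\langle x_i\rangle^0$ and has to be repaired by enlarging each $G_i$ to absorb appropriate torsion of $G$ while preserving rank one and independence. Once this invariant-matching bookkeeping is in place, the back-and-forth is a routine iteration of Theorem~\ref{thm10} combined with the niceness lemmas.
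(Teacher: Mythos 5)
Your overall architecture matches the paper's: set $G_i=\langle x_i\rangle^0$, pad with a torsion module $T$ to make the Ulm invariants agree, then run a back-and-forth built on Theorem~\ref{thm10} together with the niceness Lemmas~\ref{lemma4_7} and \ref{lemma3}. But there are two genuine problems in the setup.

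First, your assertion that ``the $G_i$ are independent in $G$, so the internal sum $G_1+\cdots+G_n$ is the direct sum'' is false. Every torsion element $y$ satisfies $ay=0\in\langle x_i\rangle$ for some $a\neq0$, so $tG\subseteq\langle x_i\rangle^0=G_i$ for \emph{every} $i$; thus $G_i\cap G_j\supseteq tG$, and the internal sum is not direct unless $G$ is torsion-free. The module $G_1\oplus\cdots\oplus G_n$ in the statement is the \emph{external} direct sum, a new abstract module, not a submodule of $G$. Your subsequent claim that ``$G/H$ is torsion'' relies on this false identification.

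Second, the ``main obstacle'' you flag --- that $\hat u_p(\alpha,G\oplus T)=\hat u_p(\alpha,H)$ might fail for the naive $G_i$ and require redistributing torsion among the $G_i$ --- is not an obstacle at all, and the proposed repair (enlarging the $G_i$ while ``preserving rank one and independence'') is ill-posed for the reason above. The paper simply takes $T=(tG)^{n-1}$. This works on the nose: since $G/G_i$ and $G/tG$ are torsion-free, heights of torsion elements computed in $G_i$ or in $tG$ agree with heights in $G$, and $G_i[p]=(tG)[p]=G[p]$; hence $u_p(\alpha,G_i)=u_p(\alpha,tG)=u_p(\alpha,G)$ for every $\alpha$, so both $G\oplus T$ and $G_1\oplus\cdots\oplus G_n$ have $u_p(\alpha,-)=n\,u_p(\alpha,G)$. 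No bookkeeping or modification of the $G_i$ is needed. Once you have this observation, the rest of your back-and-forth argument is essentially the paper's.
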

\begin{proof}
For each $i$, let $$G_i = \langle x_i\rangle ^0 \subseteq G.$$ Let $T$ be the direct sum of $n-1$ copies of $tG$, the torsion submodule of $G$. Then $G \oplus T$ and $G_1 \oplus \cdots \oplus G_n$ have the same Ulm invariants since the torsion submodule of each is isomorphic to $tG^n$. Let $S$ and $L$ be the submodules of $G\oplus T$ and $G_1 \oplus \cdots \oplus G_n$ respectively generated by the $x_i$, $1 \le i \le n$. Let $\varphi : S \rightarrow L$ be the isomorphism such that $$\varphi (x_i) = x_i \text{ for } 1 \le i \le n.$$ Then, since the height of multiples of $x_i$, for $1 \le i \le n$, is the same in $G$, $G \oplus T$ or $G_1 \oplus \cdots \oplus G_n$, and since $\{x_1, \dots , x_n \}$ is a decomposition basis for $G \oplus T$ or $G_1 \oplus \cdots \oplus G_n$, $\varphi$ is height preserving.

Let  $I$ be the set of all $f: A \rightarrow B$ such that 
\begin{itemize} 
\item[\rm{(i)}] $A \text { and } B \text{ are finitely generated submodules of } G \oplus T 
\text{ and } G_1 \oplus \cdots \oplus G_n \text{ respectively, }$
\item[\rm{(ii)}] $S \subseteq A , L \subseteq B$, 
\item[\rm{(iii)}] $f$ is a height-preserving isomorphism and 
\item[\rm{(iv)}] $\varphi \subseteq f$.
\end{itemize} 
$I$ is not empty since it contains the function $\varphi$. 
We will prove that $I$ has the back-and-forth property. 
Let $f \in I$, $z+w \in G \oplus T$, where $z \in G$ and $w \in T$. 
Note that $G\oplus T$ is torsion over $S$, so $p^m(z+w) \in S \subseteq A$ for some $m$. 
We claim by induction on $k$, $0 \le k \le m$, that 
there is an $f_k:A_k \rightarrow B_k$ in $I$ such that 
$f \subseteq f_k$ and $p^{m-k}(z+w) \in A_k$. 
Define $f_0=f$. 
Suppose $f_k$ has been defined for $k<m$. 
By Lemma \ref{lemma4_7}, $S$ is nice in $G \oplus T$, and since $A_k$ is torsion over $S$, 
$A_k$ is nice by Lemma \ref{lemma3}. 
Choose $x$ proper in $p^{m-k-1}(z+w)+A_k$. 
Then $px \in A_k$. 
If $x \in S$, let $f_{k+1}=f_k$. 
Suppose $x \notin S$, in particular $x \ne 0$. 
Then $|x| + 1 < \length(G) + 1$. 
By Theorem \ref{thm10}, there is an isomorphism 
$f_{k+1}: \langle A_k,x\rangle \rightarrow \langle B_k,y\rangle$ 
that preserves heights up to $\length(G)+1 > \length(G) = \length(G\oplus T)$. 
Then $f_{k+1} \in I$ and $p^{m-k-1}(z+w) \in x+A_k \subseteq A_{k+1}=\langle A_k,x\rangle$. 
This completes the induction. 
Let $g=f_m$. 
Then $z+w \in \domain(g), f \subseteq g$ and $g\in I$, as desired.

Similarly, if $z_1 + \dots +z_n \in G_1 \oplus \cdots \oplus G_n$, we may choose an $m \ge 0$ such that $p^mz_i \in \langle x_i\rangle $ for $1 \le i \le n$. Then $p^m(z_1 + \dots + z_n) \in \langle x_1, \dots , x_n\rangle $, so we may extend $f^{-1}$ to $\langle B,z_1 + \dots + z_n\rangle $. Thus $I$ has the back-and-forth property and so is a partial isomorphism.

\end{proof}

\begin{lemma} \label{lemma5} \cite{JL2}
Let $G$ be a module, $X$ a decomposition basis for $G$ and $S$ a finitely generated submodule of $G$ such that $S \cap \langle X\rangle  = \langle S \cap X\rangle $. Then for every $y \in X$ such that $y \notin S$, there is an $n \in \omega$ such that for all $r \in \Z_p$ and $s \in S$, $$|rp^ny+s| = \min\{|rp^ny|, |s|\}.$$
\end{lemma}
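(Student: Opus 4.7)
The plan is to reduce the statement to showing that a certain set of ``bad indices'' $B\subseteq\omega$ is finite. Observe first that the desired equality $|rp^ny+s|=\min\{|rp^ny|,|s|\}$ can fail only when $|rp^ny|=|s|<\infty$ and the sum has strictly larger height. Writing a nonzero $r$ as $r=p^ku$ with $u$ a unit, this failure depends only on $m:=n+k$ and is equivalent to $p^my\in S+p^{|p^my|+1}G$. So the conclusion reduces to finding $n$ with $m\notin B$ for every $m\geq n$, where
\[
 B=\{m\in\omega : |p^my|<\infty \text{ and } p^my\in S+p^{|p^my|+1}G\}.
\]
It is then enough to prove that $B$ is finite and take $n=\max B+1$.

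For the setup, put $X_S:=S\cap X$, which is finite since $X$ is independent with elements of infinite order and $S$ has finite rank. The composition $S/\langle X_S\rangle \hookrightarrow G/\langle X\rangle$ is well-defined thanks to the hypothesis $S\cap\langle X\rangle=\langle X_S\rangle$, and its target is torsion because $X$ is a decomposition basis; hence the finitely generated $\Z_p$-module $S/\langle X_S\rangle$ is torsion, so finite. By Lemma \ref{lemma4_7} applied to the partial decomposition basis of finite subsets of $X$, the submodule $\langle X_S\rangle$ is nice in $G$. I fix proper representatives $t_1=0, t_2, \dots, t_N$ of the cosets of $\langle X_S\rangle$ in $S$ and put $h_k=|t_k|\in\mathrm{Ord}\cup\{\infty\}$; properness of $t_k$ then yields the clean identity $|t_k+v|=\min\{h_k,|v|\}$ for every $v\in\langle X_S\rangle$.

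The heart of the proof is showing $B$ is finite. For $m\in B$, write $p^my=s+g$ with $s\in S$, $|g|>|p^my|$ and (automatically) $|s|=|p^my|$, and decompose $s=t_k+v$ with $v\in\langle X_S\rangle$. If $k=1$, then $s\in\langle X_S\rangle\subseteq\langle X\rangle$, so $g=p^my-s\in\langle X\rangle$, and the decomposition basis formula forces $|g|=\min\{|p^my|,|s|\}=|p^my|$, contradicting $|g|>|p^my|$. For $k>1$ I split on the formula $|s|=\min\{h_k,|v|\}$. In subcase (a), $h_k\leq|v|$ and so $|p^my|=|s|=h_k$; because $U(y)$ takes each finite value at most once, at most one $m$ arises for each such $k$. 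In subcase (b), $h_k>|v|$ gives $|p^my|=|v|$, and I compute $|p^my-v|$ in two ways: inside $\langle X\rangle$ the decomposition basis formula yields $|p^my-v|=\min\{|p^my|,|v|\}=|p^my|$, whereas $p^my-v=t_k+g$ together with $|t_k|=h_k>|p^my|$ and $|g|>|p^my|$ forces $|t_k+g|>|p^my|$ by the valuation inequality, a contradiction. Hence subcase (b) never occurs, so $|B|$ is bounded by the number of $k>1$ with $h_k<\infty$.

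With $n:=\max B+1$ (or any $n$ if $B=\emptyset$), I verify the conclusion. The case $r=0$ is trivial; for $r=p^ku$ with $u$ a unit, $m:=n+k>\max B$ is not in $B$. If $|p^my|=\infty$ or $|s|\neq|p^my|$ the identity follows at once from the valuation inequality, while in the remaining case $|s|=|p^my|<\infty$ any violation $|rp^ny+s|>|p^my|$ would place $p^my$ in $S+p^{|p^my|+1}G$, i.e., $m\in B$, a contradiction. The principal obstacle is subcase (b) in the finiteness of $B$: the two evaluations of $|p^my-v|$ must be set up precisely enough to collide, and this is exactly where properness of $t_k$ with respect to $\langle X_S\rangle$ (supplying $h_k>|p^my|$) and the decomposition basis formula on $\langle X\rangle$ (supplying the inner min $=|p^my|$) combine to produce the contradiction.
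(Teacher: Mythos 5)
Your proof is correct, and it takes a genuinely different route from the paper's. The paper's argument (an outline of~\cite[Lemma~4.4]{JL2}) first disposes of the case $U(y)\sim(\infty,\infty,\ldots)$, then peels off the divisible part of $(S\oplus\langle y\rangle)^0$ via Kaplansky's structure theorem, invokes Lemma~\ref{lemma4} to get a height-preserving partial isomorphism of the reduced part onto a direct sum $H_0\oplus\cdots\oplus H_k$ of rank-one pieces, pushes $S\oplus\langle y\rangle$ into the domain of some $f'$, and extracts $n$ from the finitely many heights occurring in the finite torsion module $\pi_{H_0}f'(S)$. You instead characterize the bad exponents intrinsically in $G$: you show the failure of the formula at a fixed $m$ is exactly membership in the set $B$, invoke Lemma~\ref{lemma4_7} to get $\langle X_S\rangle$ nice, select a proper representative $t_k$ with height $h_k$ in each of the finitely many cosets of $\langle X_S\rangle$ in $S$, and then a clean case split (the decomposition-basis formula on $\langle X\rangle$ rules out $k=1$ and subcase~(b), while subcase~(a) pins $|p^m y|=h_k$) together with strict monotonicity of $U(y)$ on finite values forces $|B|\le N-1$. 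Your approach buys simplicity: it avoids Lemma~\ref{lemma4}, Kaplansky's splitting, and all partial-isomorphism bookkeeping, replacing them with a direct valuation computation on cosets of $\langle X_S\rangle$. The paper's approach is heavier but makes visible the structural ingredient -- embedding into a completely decomposable module up to a torsion summand -- which is exactly what Lemma~\ref{lemma4} was built to provide and which recurs in the broader classification program. Both proofs ultimately hinge on the same two finiteness facts ($S\cap X$ finite, $S/\langle S\cap X\rangle$ finite) and on $U(y)$ escaping any finite set of heights; the difference is whether the ``finitely many obstructing heights'' are found in a projected torsion module or among proper coset representatives.
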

\begin{proof}
A detailed proof, which can be found in \cite[ Lemma 4.4]{JL2} will be outlined here for the sake of completeness.
The conclusion is immediate if $U(y) \sim (\infty,\infty,\dots)$, so we may assume $U(y) \not\sim (\infty,\infty,\dots)$.
Let $S \cap X = \{x_1, \dots , x_m\}$ where for some $k \le m$, $U(x_i) \sim (\infty, \infty, \ldots)$ if and only if $i > k$. Let $H = (S \oplus \langle y\rangle)^0$ with divisible part $D$. 
By \cite[Theorem 6]{Kap2}, we may write $H=R\oplus D$ where $R$ is a reduced module with decomposition basis $\{ y, x_1, \ldots, x_k\}$. By Lemma \ref{lemma4} there is an
$I: R\oplus T\cong_p H_0\oplus\ldots\oplus H_k$
where $T$ is torsion, $y\in H_0$ and $x_i\in H_i$ for $i=1,\ldots, k$. For a map $f:A\to B$ in $I$ define
$f':  A\oplus D  \to B\oplus D$ by $(a,x) \mapsto  (f(a),x)$.

Since $I'=\{ f' : f\in I\}$ has the back-and-forth property, the finitely generated $S\oplus\langle y\rangle$ is contained in the domain of some $f'\in I'$. Then $\pi_{H_0} f'(S)$ is finitely generated. It is also torsion, since any element of $S$ has a multiple in $\langle S\cap X\rangle$. Hence its elements
take on  only a finite number of heights. Since the elements
$p^nf'(y)$ ($n\in\omega$) have infinitely many heights, there is an $n\in\omega$ such that $|rp^n y|\neq |s|$ for  all $0\neq r\in \Z_p$ and $s \in \pi_{H_0} f'(S)$.

Now let $s \in S$, $r \in \Z_p$ and write $f'(s)$ as an element of $H_0\oplus\dots \oplus H_k\oplus D$. Then we may show that  
$|f'(rp^ny + s)| = \min\{|f'(rp^ny)|, |f'(s)|\}$
and the conclusion follows since $f'$ is height-preserving.
\end{proof}

Now we are ready for the classification theorem.

\begin{theorem} \label{thm11} 
Let $G$ and $H$ be modules with partial decomposition bases. Then $G \cong_p H$ if and only if for every $\alpha$, an ordinal or $\infty$, and equivalence class $e$ of Ulm sequences $\hat{u} (\alpha, G) = \hat{u}(\alpha, H)$ and $\hat{w}(e,G) = \hat{w}(e,H)$. In that case, if $\mathcal{C}$ and $\C'$ are partial decomposition bases of $G$ and $H$ respectively as in Theorem \ref{thm9} and if $X_0 \in \mathcal{C}$, $Y_0 \in \mathcal{C}'$ and $f_0 : X_0 \rightarrow Y_0$ is a bijection such that $U(x) = U(f_0 (x))$ for all $x \in X_0$, then $I:G\cong_p H$ may be chosen to be the set of all maps $f:S\to T$ for which there exist $X\in\C$ and $Y\in\C'$ satisfying the following properties:
\begin{itemize}
    	\item [\rm{(i)}] $S$ and $T$ are finitely generated submodules of $G$ and $H$ respectively;
	\item [\rm{(ii)}] $f$ is a height-preserving isomorphism;
	\item [\rm{(iii)}] $X \subseteq S \subseteq \langle X\rangle ^0$ and $Y \subseteq T \subseteq \langle Y \rangle^0$;
	\item [\rm{(iv)}] $f(X) =Y$;
	\item [\rm{(v)}] $X_0 \subseteq X$ and $f \restriction X_0 = f_0$.
\end{itemize}
\end{theorem}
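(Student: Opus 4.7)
The forward direction is short: Corollary \ref{cor2} immediately gives $\hat w(e,G)=\hat w(e,H)$ for all $e$. The Ulm-invariant equalities follow because any $J\colon G\cong_p H$ consists of height-preserving isomorphisms (Lemma \ref{lemma1}), so every finitely generated subset of $p^\alpha G[p]$ embeds into $p^\alpha H[p]$ modulo $p^{\alpha+1}H[p]$ and vice versa, forcing $\hat u(\alpha,G)=\hat u(\alpha,H)$ for every ordinal $\alpha$; the case $\alpha=\infty$ is the same argument applied to $p^\infty G[p]$. Alternatively, this is the standard Barwise--Eklof argument from the proof of Theorem \ref{thm5} together with Karp's theorem.

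For the reverse direction I would verify directly that the set $I$ described by (i)--(v) is a partial isomorphism. First, $I\neq\emptyset$: the $R$-linear extension of $f_0$ to $\langle X_0\rangle$ is height-preserving because $X_0,Y_0$ are decomposition sets with $U(x)=U(f_0(x))$ for every $x\in X_0$, so this extension lies in $I$. For the back-and-forth property, fix $f\colon S\to T$ in $I$ with witnesses $X,Y$ and an element $g\in G$; I construct $\tilde f\in I$ extending $f$ with $g\in\domain(\tilde f)$ in three stages, and the $H$-side is handled symmetrically.

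Stage 1 (enlarge the basis). By condition (iii) of a partial decomposition basis, choose $X^*\in\mathcal C$ with $X\subseteq X^*$ and $g\in\langle X^*\rangle^0$; write $X^*\setminus X=\{x_1',\dots,x_k'\}$ with $x_j'$ of Ulm class $e_j$. Since $f$ preserves Ulm sequences, $Y$ already contains as many elements of class $e$ as $X$ does; as $\hat w(e,H)=\hat w(e,G)$ bounds the count of class-$e$ elements in $X^*$, an inductive application of Theorem \ref{thm8'} (one class at a time) produces $Y^*\in\mathcal C'$ with $Y\subseteq Y^*$ containing $y_1',\dots,y_k'$ with $U(y_j')\in e_j$. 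Because $\mathcal C,\mathcal C'$ are closed under $\Z$-multiples of subsets (Theorem \ref{thm9}), we may replace $x_j'$ by $p^{n_j}x_j'$ and $y_j'$ by $p^{m_j}y_j'$ for suitable $n_j,m_j$ so that $U(x_j')=U(y_j')$ on the nose.

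Stage 2 (extend $f$ along the new basis elements). Because $S\subseteq\langle X\rangle^0$ and $X^*$ is independent, $S\cap\langle X^*\rangle=\langle X\rangle=\langle S\cap X^*\rangle$, so Lemma \ref{lemma5} applies in the submodule $\langle X^*\rangle^0$, for which $X^*$ is a decomposition basis; symmetrically on the $H$-side for $T$ and $Y^*$. Raising $n_j$ if necessary and using Theorem \ref{thm9} again to stay inside $\mathcal C,\mathcal C'$, we may assume for each $j$ that
\[
|r x_j'+s|=\min\{|rx_j'|,|s|\},\qquad |ry_j'+t|=\min\{|ry_j'|,|t|\}
\]
for all $r\in\Z_p$, $s\in S$, $t\in T$, while still $U(x_j')=U(y_j')$. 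Set $X'=X\cup\{x_1',\dots,x_k'\}$, $Y'=Y\cup\{y_1',\dots,y_k'\}$. The linear extension $f'\colon\langle S,x_1',\dots,x_k'\rangle\to\langle T,y_1',\dots,y_k'\rangle$ defined by $f'|_S=f$ and $f'(x_j')=y_j'$ is a well-defined isomorphism (independence of $X^*$ combined with $S\subseteq\langle X\rangle^0$), and the cleanness estimates together with $U(x_j')=U(y_j')$ make it height-preserving. Thus $f'\in I$ with witnesses $X',Y'$, and $g\in\langle X'\rangle^0$ still.

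Stage 3 (capture $g$). Choose $m\geq 0$ with $p^m g\in\langle X'\rangle\subseteq\domain(f')$. Lemma \ref{lemma4_7} shows $\langle X'\rangle$ is nice in $G$, and Lemma \ref{lemma3} then gives that $\domain(f')$ is nice; similarly $\range(f')$ is nice in $H$. Pick an ordinal $\alpha$ strictly greater than $\length(G)$ and $\length(H)$, so that the hypothesis supplies $\hat u(\sigma,G)=\hat u(\sigma,H)$ for all $\sigma<\alpha$ and $f'$ trivially preserves heights up to $\alpha$ (and this is equivalent to preserving heights outright). Choosing $x$ proper in $p^{m-1}g+\domain(f')$, Theorem \ref{thm10} extends $f'$ by $x$, still in $I$; iterating the argument $m$ times brings $g$ into the domain. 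The main obstacle is Stage 2, where one must simultaneously arrange exact equality of Ulm sequences, the Lemma \ref{lemma5} cleanness estimate on both the $G$- and $H$-side, and closure in $\mathcal C,\mathcal C'$. Theorem \ref{thm9} is the essential tool here, as it lets us freely replace basis elements by $\Z$-multiples without leaving the partial decomposition bases, and it is crucial that the multiplications occur only on elements of $X^*\setminus X$ and $Y^*\setminus Y$, so that condition (v) ($X_0\subseteq X$, $f\restriction X_0=f_0$) is preserved throughout.
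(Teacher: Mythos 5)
Your overall strategy mirrors the paper's: enlarge the partial decomposition basis, use the $\hat w$ hypothesis and Theorem \ref{thm8'} to find matching Ulm classes on the $H$-side, use Lemma \ref{lemma5} and Theorem \ref{thm9} to replace by multiples that clean up heights, and finally apply the niceness lemmas and Theorem \ref{thm10} to absorb the target element one torsion step at a time. But two points need attention.

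The serious gap is in Stage~3. Theorem \ref{thm10} requires the new element $x$ to satisfy $|x|_p + 1 < \alpha$ for some \emph{ordinal} $\alpha$, which is impossible when $|x|_p = \infty$. Your choice $\alpha > \length(G), \length(H)$ covers the finite-height case, but $G$ and $H$ need not be reduced, so the proper representative $x$ of $p^{m-1}g + \domain(f')$ may well have infinite height. The paper treats this case separately: if $|x|=\infty$ then $f'(px)=pb'$ for some $b'\in H$ of infinite height; if $b'$ already lies in the range of $f'$, a dimension count using $\hat u(\infty,G)=\hat u(\infty,H)$ (this is the only place the $\alpha=\infty$ Ulm invariant is used) produces $\tilde b\in (p^\infty H)[p]$ outside the range, and $b=b'+\tilde b$ is the required image. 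Without an argument of this kind the back-and-forth property fails for non-reduced modules, so you must add this case.

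A secondary issue is the Stage~2 claim that the simultaneous extension $f'(x_j')=y_j'$ is height-preserving. Lemma \ref{lemma5} gives the estimate $|rx_j'+s|=\min\{|rx_j'|,|s|\}$ only for $s\in S$, not for $s\in\langle S,x_1',\ldots,x_{j-1}'\rangle$, so the estimates you quote do not directly compute $|s+r_1x_1'+\cdots+r_kx_k'|$. The paper avoids this by adjoining the new basis elements one at a time, re-applying Lemma \ref{lemma5} with the enlarged submodule at each step; you should do the same (or supply an argument that the $k$-fold cleanness follows from the pairwise estimates together with the decomposition property of $X^*$, which is not immediate).
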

\begin{proof}
First, suppose $G \cong_p H$. Then their $\hat w$ invariants agree by Corollary \ref{cor2} and their $\u$ invariants agree by \cite[Lemma 2.2]{BE}.

Now suppose $G$ and $H$ have the same $\hat u$ and $\hat w$ invariants. Let $\C$ and $\C'$ be the respective partial decomposition bases which are closed under taking subsets and multiples as in Theorem \ref{thm9}. Notice that a map $f_0:X_0\to Y_0$ as stated in the theorem always exists; take for instance $f=\emptyset$ with $X_0=Y_0=\emptyset$. Note that if $f_0=\emptyset$, condition (v) is clearly vacuous. 

Let $I$ be the set of $f: S \rightarrow T$ such that there exists $X \in \mathcal{C}$ and $Y \in \C'$ such that $f, S, T, X$ and $Y$ satisfy (i) through (v). The set $I$ is not empty since it contains the extension $\tilde{f}_0:\langle X_0\rangle\to\langle Y_0\rangle$ of $f_0$. We will prove that $I$ has the back-and-forth property. Let $f: S \rightarrow T$ be an element of $I$ with associated $X$ and $Y$ and suppose $a \in G \setminus S$. Choose $X' \in \mathcal{C}$ such that $X \subseteq X'$ and $a \in \langle X'\rangle ^0$. 

First we claim $f$ may be extended to some nonzero multiple of any $x \in X'$, $x \notin S$. By taking subsets we get $X'' = X \cup \{x\} \in \C$. 
Let $e=[U(x)]$, the equivalence class of $U(x)$. Since $f$ is a height-preserving isomorphism, $\w(e,\langle X\rangle^0) = \w(e,\langle Y\rangle^0)$. 
Since $$\w(e,H) =\w(e,G) \ge \w(e,\langle X \cup \{x\}\rangle^0) > \w(e,\langle X \rangle^0),$$by Theorem \ref{thm8'} there is a $Y' \in \C'$ with $y \in Y' \setminus Y$ and $U(y)\sim U(x)$. Let $Y''=Y\cup\{y\} \in \C'$.
Apply Lemma \ref{lemma5} to the module $\langle S,x\rangle ^0$, the submodule $S$ and the decomposition basis $X''$. This is possible since $S\cap \langle X''\rangle  = \langle X\rangle  = \langle S \cap X''\rangle$ and $x \notin S$. Apply it similarly to $\langle T,y\rangle^0$, $T$ and $Y''$. 
Thus there is an $n$ such that for all $s \in S$, $t \in T$ and $r \in \Z_p$, \begin{align*}|rp^nx + s| &= \min\{|rp^nx|, |s|\} \text{ and} \\|rp^ny+t| &=\min\{|rp^ny|,|t|\}\end{align*} 

Let $x' = p^nx$ and $y'=p^ny$.
Since $U(x')\sim U(y')$, there are $\tilde x$ and $\tilde y$, nonzero multiples respectively of $x'$ and $y'$, such that $U(\tilde x) = U(\tilde y)$.
Define $g: \langle S,\tilde x\rangle  \to \langle T,\tilde y\rangle$ extending $f$ by $g(\tilde x) = \tilde y$. 
It is easily verified that $g$ satisfies all conditions (i) - (v) on $I$ with $X \cup \{\tilde x\}$ and $Y \cup \{\tilde y\}$. 

This proves that we may extend $f$ successively to some nonzero multiple of each element of $X'$, specifically that there is a $g: S^\ast \rightarrow T^\ast$ with related $X^\ast$ and $Y^\ast$ satisfying (i) - (v) such that $a \in \langle X'\rangle^0 = \langle X^\ast \rangle^0$. Since $X^\ast \subseteq S^\ast$, for some $m$, $p^ma \in S^\ast$. It suffices to prove that we may extend $g$ to $a$ in the case $pa \in S^\ast$ and $a \not\in S^\ast$.

So suppose $pa\in S^\ast$. By Lemma \ref{lemma4_7} $\langle X^\ast\rangle$ is nice since $X^\ast \in \C$. Since $S^\ast \subseteq \langle X^\ast \rangle ^0$, $S^\ast/\langle X^\ast\rangle$ is torsion. $S^\ast/\langle X^\ast\rangle$ is finitely generated since $S^\ast$ is. It then follows that $S^\ast$ is nice by Lemma \ref{lemma3}, so we may assume $a$ is proper with respect to $S^\ast$. If $|a| \ne \infty$,  by Theorem \ref{thm10}, we may extend $g$ to a height-preserving $g^*: \langle S^\ast,a\rangle  \rightarrow \langle T^\ast,b\rangle $ for some $b \in H$. Now suppose $|a|=\infty$. Then $|g(pa)|=\infty$ and so $g(pa)=pb'$ for some $b' \in H$ with $|b'|=\infty$. We claim that there is a $b \in H\setminus T^\ast$ such that $pa=pb$ and $|b|=\infty$. If $b' \not\in T^\ast$, we are done, so suppose $b' \in T^\ast$, say $b'=g(z)$ for some $z \in S^\ast$. Then $g(pa-pz)=0$, so $p(a-z)=0$ and hence $a-z \in p^\infty G[p]$. Thus $\dim(p^\infty H \cap T^\ast)[p]=\dim(p^\infty G\cap S^\ast)[p] < \dim(p^\infty G\cap \langle S^\ast,a\rangle)[p] \leq \u(\infty, G)=\u(\infty, H)$. It follows that there is a $\tilde b \in (p^\infty H)[p] \setminus(p^\infty H\cap T^\ast)[p]$. Then $\tilde{b}\not\in T$, $|\tilde b|=\infty$ and $p\tilde b = 0$. Let $b = b'+\tilde b$. Then $pb=g(pa)$, $b \not\in T^\ast$ and $|b|=\infty$. 

In either case, we may extend $g$ to a height-preserving isomorphism $$g^*: \langle S^\ast,a\rangle  \rightarrow \langle T^\ast,b\rangle $$ for some $b \in H$. Then $g^*$ satisfies conditions (i) and (ii) immediately, and if we leave $X^\ast$ and $Y^\ast$ unchanged, (iv) and (v) are also satisfied. Since $a \in \langle S^\ast\rangle ^0 \subseteq \langle X^\ast\rangle ^0$, $\langle S^\ast,a\rangle  \subseteq \langle X^\ast\rangle ^0$, and similarly $\langle T^\ast,b \rangle \subseteq \langle Y^\ast\rangle$, so (iii) is satisfied.

This proves that for any $a \in G$, any $f \in I$ can be extended to a $g^* \in I$ such that $a \in \domain(g^*)$. The extension to elements of $H$ follows by symmetry.
Consequently, the set $I$ has the back-and-forth property and we conclude that $G\cong_p H$.

\end{proof}

\bibliographystyle{amsplain}

\begin{thebibliography}{10000}


\bibitem[Baer]{Baer}
{R. Baer}, \emph{Abelian groups without elements of finite order}, Duke Math Journal 3 (1937) 68-122

\bibitem[Bar]{Bar}
{J. Barwise}, \emph{Back and forth through infinitary logic}, In Studies in Model Theory. M. D. Morley, editor. Studies in Mathematics, Volume 8. Math. Assoc. America, 2 (1973)

\bibitem[BE]{BE}
{J. Barwise and P. Eklof}, \emph{Infinitary properties of abelian torsion groups}, Annals of Math. Logic, 2 (1970) 25-68


\bibitem[F]{F}
{L. Fuchs}, \emph{Infinite Abelian Groups, Volume II}, New York: Academic Press (1973)

\bibitem[GLLS]{GLLS} R. G\"obel, K. Leistner, P. Loth, L. Str\"ungmann, {\em Infinitary equivalence of $\mathbb{Z}_p$-modules with nice decomposition bases},
J. Comm. Algebra 3 (2011) 321-348.

\bibitem[H]{H}
{P. Hill}, \emph{On the classification of abelian groups}, Preprint, 1967.


\bibitem[HR]{HR}
{R. Hunter and F. Richman}, \emph{Global Warfield groups}, Trans. of the Amer. Math. Soc., Volume 266, Number 1, July 1981.

\bibitem[HRW]{HRW}
{R. Hunter, F. Richman and E. Walker}, \emph{Warfield modules}, In Abelian Group Theory, Lecture Notes in Math. 616. Springer-Verlag (1977) 87-123.

\bibitem[J1]{J1}
{C. Jacoby}, \emph{The Classification in $L_{\infty \omega}$ of Groups with Partial Decomposition Bases}, PhD thesis, University of California, Irvine (1980), revised version (2010).

\bibitem[J2]{J2} C. Jacoby, {\em Undefinability of local Warfield groups in $L_{\infty\omega}$},
in Groups and Model Theory: A Conference in Honor of R\"udiger G\"obel's 70th Birthday, Contemp. Math., Vol. {\bf 576}, Amer. Math. Soc., Providence, RI (2012), pp. 151-162.

\bibitem[JL1]{JL1} C. Jacoby and P. Loth, {\em Abelian groups with partial decomposition bases in $L_{\infty\omega}^\delta$, Part II}, in Groups and Model Theory: A Conference in Honor of R\"udiger G\"obel's 70th Birthday, Contemp. Math., Vol. {\bf 576}, Amer. Math. Soc., Providence, RI (2012), pp. 177-185.

\bibitem[JL2]{JL2}
C. Jacoby and P. Loth, {\em $\Z_p$-modules with partial decomposition bases in $L_{\infty\omega}^\delta$}, Houston J. Math. Volume {\bf 40}, No. 4 (2014), pp.1007-1019

\bibitem[JL3]{JL3} C. Jacoby and P. Loth, {\em Partial decomposition bases and Warfield modules}, Comm. Algebra {\bf 42} (2014), 4333-4349.

\bibitem[JL4]{JL4} C. Jacoby and P. Loth, {\em Partial decomposition bases and global Warfield groups}, Comm. Algebra, to appear.

\bibitem[JL5]{JL5}
C. Jacoby and P. Loth, {\em The classification of infinite abelian groups with partial decomposition bases in $L_{\infty \omega}$}, in preparation.

\bibitem[JLLS]{JLLS}
C. Jacoby, K. Leistner, P. Loth and L. Str\"ungmann, {\em Abelian groups with partial decomposition bases in $L_{\infty\omega}^\delta$, Part I}, in Groups and Model Theory: A Conference in Honor of R\"udiger G\"obel's 70th Birthday, Contemp. Math., Vol. {\bf 576}, Amer. Math. Soc., Providence, RI (2012), pp. 163-175.

\bibitem[Kap1]{Kap1}
{I. Kaplansky}, \emph{Infinite Abelian Groups}, Ann Arbor: University of Michigan Press (1968).

\bibitem[Kap2]{Kap2} I. Kaplansky, {\em Modules over Dedekind rings and valuation rings}, Trans. Amer. Math. Soc. {\bf 72} (1952), 327-340.


\bibitem[Kar]{Kar}
{C. Karp}, \emph{Finite quantification equivalence}. In The Theory of Models. Edited by J. W. Addison, L. Henkin and A. Tarski. Amsterdam: North-Holland (1965), 407- 412.


\bibitem[N]{N}
{R. Nunke}, \emph{Homology and direct sums of countable abelian groups}, Math. Zeit 101 (1967) 182-212.

\bibitem[R]{R}
{F. Richman}, \emph{A Guide to valuated groups}. In Abelian Group Theory Lecture Notes in Math. 616. Springer-Verlag (1977) 73 - 86.


\bibitem[S]{S}
{R. O. Stanton}, \emph{Decomposition bases and Ulm's Theorem}. In Abelian Group Theory. Lecture Notes in Math. 616. Springer-Verlag (1977) 39-56.


\bibitem[U]{U}
{H. Ulm}, \emph{Zur Theorie der abzahlbar-unendlichen abelschen Gruppen}, Math. Ann., 107 (1933), 774-803.

\bibitem[W1]{W1}
{R. B. Warfield}, \emph{Classification theory of abelian groups I, balanced projectives}, Trans. Amer. Math. Soc., 222 (1976) 33-63.

\bibitem[W2]{W2}
{R. B. Warfield}, \emph{Classification theory of abelian groups II, local theory}, Lecture Notes in Math., Vol. 874, Springer Verlag (1981), pp 322-349.
\end{thebibliography}

\end{document}